\newcommand{\Q}{{\mathbb Q}}
\newcommand{\Z}{{\mathbb Z}}
\newcommand{\f}{{\mathfrak f}}
\newcommand{\R}{{\mathbb R}}
\newcommand{\OO}{{\mathcal O}}
\newcommand{\q}{{\mathfrak q}}
\renewcommand{\a}{{\mathfrak a}}
\newcommand{\p}{{\mathfrak p}}
\newcommand{\<}{\langle}
\renewcommand{\>}{\rangle}
\newcommand{\m}{{\mathfrak m}}
\DeclareMathOperator{\ord}{ord}
\newcommand{\Gal}{\mathrm{Gal}}
\renewcommand{\c}{\mathfrak{C}}
\title[Continued Fractions and Iwasawa Invariants]{Using Continued Fractions to Compute Iwasawa Lambda Invariants of Imaginary Quadratic Number Fields}
\author{Jordan Schettler}
\address{Department of Mathematics\\
South Hall, Room 6607\\
University of California\\
Santa Barbara, CA 93106-3080\\
\href{mailto:jcs@math.ucsb.edu}{jcs@math.ucsb.edu}}
\begin{document}
 \setcounter{tocdepth}{1}
\newtheorem{thm}{Theorem}
\newtheorem{conj}[thm]{Conjecture}
\newtheorem{prop}[thm]{Proposition}
\newtheorem{lemma}[thm]{Lemma}
\newtheorem{corollary}[thm]{Corollary}
\newtheorem*{fact}{Fact}
\theoremstyle{remark}
\newtheorem{rem}[thm]{Remark}
\theoremstyle{definition}
\newtheorem{defn}[thm]{Definition}
\newtheorem*{goal}{Goal}
\newtheorem{assume}{Assumption}
\renewcommand{\theassume}{\Alph{assume}}
\newtheorem{exam}[thm]{Example}
\numberwithin{equation}{section}

\begin{abstract}
Let $\ell>3$ be a prime such that $\ell \equiv 3 \pmod{4}$ and $\Q(\sqrt{\ell})$ has class number 1. Then Hirzebruch and Zagier noticed that the class number of $\Q(\sqrt{-\ell})$ can be expressed as $h(-\ell) = (1/3)(b_1+b_2 + \cdots +b_m) - m$ where the $b_i$ are partial quotients in the `minus' continued fraction expansion $\sqrt{\ell} = [[b_0; \overline{b_1, b_2, \ldots, b_m}]]$. 
For an odd prime $p \neq \ell$, we prove an analogous formula using these $b_i$ which computes the sum of Iwasawa lambda invariants $\lambda_p(-\ell)+\lambda_p(-4)$ of $\Q(\sqrt{-\ell})$ and $\Q(\sqrt{-1})$. In the case that $p$ is inert in $\Q(\sqrt{-\ell})$, the formula pleasantly simplifies under some additional technical assumptions.
\end{abstract}

\maketitle

\tableofcontents

\section{Notation and Assumptions}

Let $K$ be a real quadratic number field of discriminant $D$. Suppose
\begin{align}\label{factor}
D = D_1D_2
\end{align}
where $D_1$, $D_2$ are the discriminants of quadratic number fields $K_1$, $K_2$, respectively. We will frequently make the following assumption.
\begin{assume}\label{ass}
Suppose the class number of $K$ is 1 and that $D$ is divisible by a prime congruent to $3$ modulo $4$.
\end{assume}
\begin{rem}\label{remark}
Make Assumption \ref{ass}. Then $K$ has no units of negative norm and the factorization in Eq. \ref{factor} is unique (up to ordering of factors) with $D_1$, $D_2$ negative by classical genus theory. Without loss of generality, $-D_1$ is a prime congruent to $3$ modulo $4$ and $-D_2$ is either $4$, $8$, or a prime congruent to $3$ modulo $4$.
\end{rem}
For $i=1,2$ let $h(D_i)$ denote the class number of $K_i$. For a prime $p$, let $\lambda_p(D_i)$ denote the Iwasawa lambda invariant of the cyclotomic $\Z_p$-extension of $K_i$.
\begin{goal}
Under Assumption \ref{ass}, we want a formula for the sum of lambda invariants $\lambda_p(D_1)+\lambda_p(D_2)$ which is analogous to Hirzebruch and Zagier's formula for the product of class numbers $h(D_1)h(D_2)$ given in terms of the partial quotients in the `minus' continued fraction expansion of $(\delta+\sqrt{D})/2$ where $\delta \in \{0,1\}$ with $D\equiv \delta \pmod{4}$.
\end{goal}
To accomplish this goal, we first recall some computations of special values of partial zeta functions obtained by Kronecker limit formulas at $s=1$ or by the methods of Takuro Shintani at $s=0$. Then we relate these to special values of $L$-functions which can be alternatively given in terms of the arithmetic invariants $h(D_i)$ and $\lambda_p(D_i)$.
\section{Special Values of Partial Zeta Functions}
Suppose $\m = \m_0\m_{\infty}$ is a modulus of $K$ where we view $\m_0$ as an ideal in the ring of integers $\OO_K$ and we will always assume $\m_{\infty}$ is the product of both real places of $K$.
We denote the narrow ray class group associated to $\m$ by $C_\m$ as in \cite{milneCFT}. 
Consider the partial zeta function $\zeta(s,\c)$ associated to some $\c\in C_\m$, i.e., the meromorphic continuation of the sum
\begin{align*}
\mathop{\sum_{\a \in \c}}_{\a \subseteq \OO_K} \frac{1}{N(\a)^s}
\end{align*}
where $N(\a)$ denotes the absolute norm of $\a$. We have a Laurent expansion
\begin{align*}
\zeta(s,\c) = \frac{\kappa}{s-1} + \varrho(\c) + \varrho_1(\c)(s-1) + \varrho_2(\c)(s-1)^2+\cdots
\end{align*}
where $\kappa$ is a constant which depends on $\m$ but not on $\c$. Computations of $\varrho(\c)$ are called `Kronecker limit formulas' for real quadratic number fields because Leopold Kronecker first computed this quantity in the context of an imaginary quadratic number field. If $\chi$ is a nontrivial character on $C_\m$, the $L$-function
\begin{align*}
L(s, \chi) = \sum_{\c \in C_\m}\chi(\c)\zeta(s, \c)
\end{align*}
has special values at $s=1$, $0$ given by
\begin{align*}
&L(1, \chi) = \sum_{\c \in C_\m}\chi(\c)\varrho(\c), &L(0, \chi) =  \sum_{\c \in C_\m}\chi(\c)\zeta(0, \c).
\end{align*}
We will state some results which express $\varrho(\c)$, $\zeta(0,\c)$ in terms of continued fractions, and, in order to do so, we need a couple of lemmas which we do not prove here. See \cite{Zagi} and \cite{Zagi2}.
\begin{lemma}\label{contfrac}
Suppose $\alpha \in \R \backslash \Q$. There is a unique `minus' continued fraction expansion\footnote{This `minus' expansion is related to the usual `plus' continued fraction expansion $\alpha = [a_0; a_1, a_2, a_3, \ldots]$ where $a_0 \in \Z$ and $a_1, a_2, \ldots \in \Z_{>0}$: as sequences
\begin{align*} (b_0, b_1, b_2, \ldots) = (a_0+1, \underbrace{2, 2, \ldots, 2}_{a_1-1}, a_2+2, \underbrace{2, 2, \ldots, 2}_{a_3-1}, a_4+2, \ldots).  \end{align*}}
\begin{align*}\alpha = [[b_0; b_1, b_2, b_3, \ldots]] \mathrel{\mathop:}=  \lim_{k\rightarrow \infty} b_0-\cfrac{1}{b_1-\cfrac{1}{b_2-\cdots-\cfrac{1}{b_k}}} \end{align*}
where $b_0 \in \Z$ and $b_1, b_2, \ldots \in \Z_{>1}$. Moreover, this expansion is eventually periodic if and only if $\alpha$ is algebraic of degree $2$.
In particular, for $\delta\in \{0,1\}$ with $D\equiv \delta \mod{4}$,
\begin{align*}
\frac{\delta + \sqrt{D}}{2} = [[b_0; \overline{b_1, b_2, \ldots, b_m}]]
\end{align*}
where the bar signifies the repeating block of minimal length $m$; moreover, $b_m =  2b_0 - \delta$ and we have a palindrome
\begin{align*}
(b_1, b_2, \ldots, b_{m-1}) &= (b_{m - 1}, b_{m - 2}, \ldots, b_1).
\end{align*}
\end{lemma}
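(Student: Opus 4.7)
The plan is to address the four distinct claims in turn: (i) existence and uniqueness of the minus continued fraction expansion for any irrational $\alpha$; (ii) the conversion formula from the plus expansion stated in the footnote; (iii) eventual periodicity when $\alpha$ is quadratic; and (iv) the pure periodicity of the tail, the palindrome, and the identity $b_m = 2b_0 - \delta$ in the case $\alpha = (\delta + \sqrt{D})/2$.

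For (i), I would run the algorithm directly: set $\alpha_0 = \alpha$, $b_k = \lceil \alpha_k \rceil$, and $\alpha_{k+1} = 1/(b_k - \alpha_k)$. Since each $\alpha_k$ is irrational and strictly greater than $1$ for $k \geq 1$, the constraint $b_k \geq 2$ is automatic. Convergence of the finite truncations to $\alpha$ follows from a standard estimate on the growth of the convergent denominators. Uniqueness is forced by the hypothesis $b_k \geq 2$: the condition $0 < b_0 - \alpha < 1$ pins down $b_0 = \lceil \alpha \rceil$, and induction handles the tail.

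For (ii), I would write $\alpha = a_0 + \beta$ with $\beta \in (0,1)$ and unwind $\beta = 1/(a_1 + \beta_1)$, and so on. A direct induction on $k$ using the recursion from (i) shows successively that $b_0 = a_0 + 1$, that $b_1 = \cdots = b_{a_1 - 1} = 2$ with $\alpha_{a_1} = (a_2 + 1) + \beta_2$, whence $b_{a_1} = a_2 + 2$, and so on, matching the claimed sequence. For (iii), I would follow the standard Lagrange-type argument adapted to the minus algorithm: writing $\alpha_k = (P_k + \sqrt{D})/Q_k$ with $Q_k \mid D - P_k^2$, the recursion gives integer sequences $(P_k, Q_k)$ that one shows are bounded, so the pair eventually repeats and $(b_k)$ becomes periodic; the converse is immediate by solving the resulting fixed-point equation.

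For (iv), I would first rationalize to verify that $\alpha_1 = 2/(2b_0 - \delta - \sqrt{D})$ is \emph{minus-reduced}, meaning $\alpha_1 > 1$ and its Galois conjugate $\alpha_1'$ lies in $(0,1)$. One then shows that minus-reducedness is preserved under the algorithm and that every minus-reduced quadratic irrational has a purely periodic minus expansion, giving the periodic block $(b_1, \ldots, b_m)$. The palindrome and the identity $b_m = 2b_0 - \delta$ then follow from the involution $\alpha \mapsto 1/\alpha'$ on minus-reduced forms, which reverses the sequence of partial quotients; combining this with the intrinsic periodicity $\alpha_{m+1} = \alpha_1$ and analyzing the boundary between indices $0$ and $m$ produces both symmetries. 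The main obstacle is this final stage: matching the Galois involution against the reversal of partial quotients and extracting $b_m = 2b_0 - \delta$ cleanly requires careful boundary bookkeeping, whereas the earlier stages are essentially formal adaptations of the classical plus-continued-fraction theory.
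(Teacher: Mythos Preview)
The paper does not give its own proof of this lemma. Immediately before stating it, the author writes ``we need a couple of lemmas which we do not prove here'' and refers the reader to Zagier's papers \cite{Zagi}, \cite{Zagi2}. So there is no in-paper argument to compare your proposal against.

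That said, your outline is the standard one and is essentially what one finds in Zagier's treatment: the ceiling algorithm $b_k=\lceil\alpha_k\rceil$, $\alpha_{k+1}=1/(b_k-\alpha_k)$ for existence and uniqueness; a Lagrange-type boundedness argument on the pairs $(P_k,Q_k)$ for eventual periodicity; and the Galois involution $\alpha\mapsto 1/\alpha'$ on minus-reduced numbers for the palindrome. One small sharpening for part~(iv): once you know $\alpha_1$ is minus-reduced with pure period $(b_1,\ldots,b_m)$, the cleanest way to extract both $b_m=2b_0-\delta$ and the palindrome simultaneously is to observe that $1/\alpha_1' = b_0-\alpha' = (2b_0-\delta+\sqrt{D})/2 = \varpi$ is itself minus-reduced, so on one hand $\varpi=[[\overline{b_m;b_{m-1},\ldots,b_1}]]$ by the reversal property, while on the other hand $\varpi=\alpha+(b_0-\delta)$ gives $\varpi=[[\overline{2b_0-\delta;b_1,\ldots,b_{m-1}}]]$; uniqueness of the expansion (as a sequence, not merely up to cyclic shift) then yields both identities at once. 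This avoids the ``careful boundary bookkeeping'' you flag as the main obstacle.
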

\begin{lemma}\label{omega}
Let $\c\in C_\m$. For each integral ideal $\mathfrak{a} \in \c$, there are totally positive $z, \varpi \in K$ such that
\begin{align*}
(z)\mathfrak{a}^{-1}\m_0 = \Z + \varpi \Z
\end{align*}
where $\varpi > 1 > \varpi' > 0$ with $\varpi' = $ Galois conjugate of $\varpi$. This condition on $\varpi$ ensures that its `minus' continued fraction is purely periodic of some minimal period $m$:
\begin{align*}
\varpi = [[\overline{b_0; b_1,b_2, \ldots, b_{m-1}}]].
\end{align*}
Moreover, the sequence $(b_0, b_1, \ldots, b_{m-1})$ is determined up to cyclic permutation by $\c$.
\end{lemma}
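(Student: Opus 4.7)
The plan is to establish the three assertions in sequence: existence of totally positive $z$ and $\varpi$ with $(z)\mathfrak{a}^{-1}\m_0 = \Z + \varpi\Z$ and $\varpi > 1 > \varpi' > 0$, pure periodicity of the resulting minus continued fraction, and uniqueness of the cycle up to cyclic permutation.

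For existence, I would begin by observing that $\mathfrak{a}^{-1}\m_0$ is a free rank-$2$ $\Z$-module; picking any $\Z$-basis $\{\alpha,\beta\}$ and setting $z_0 = 1/\alpha$, $\varpi_0 = \beta/\alpha \in K\setminus\Q$ gives $(z_0)\mathfrak{a}^{-1}\m_0 = \Z + \varpi_0\Z$. The representation has a natural $\GL_2(\Z)$-ambiguity: a change of basis acts on $\varpi_0$ by a M\"obius transformation $\varpi_0 \mapsto (a\varpi_0 + b)/(c\varpi_0 + d)$ and rescales $z_0$ accordingly. Combined with the freedom of replacing $\mathfrak{a}$ by a narrow-class-equivalent ideal, i.e., multiplying by a totally positive principal scalar, this is the flexibility I would use for normalization.

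The main technical step is carrying out this normalization. Applying the minus continued fraction algorithm $\gamma \mapsto 1/(\lceil\gamma\rceil - \gamma)$ to $\varpi_0$, one iterates until landing in the reduced range $\varpi > 1 > \varpi' > 0$; the standard theory of reduced quadratic irrationals ensures this occurs after finitely many steps. The compensating rescaling of $z_0$ (a product of factors of the form $\lceil\gamma\rceil - \gamma$) may fail to be totally positive, but I would correct its signature by multiplying by a totally positive unit of $\OO_K$ or, if needed, by changing $\mathfrak{a}$ within $\c$ via a totally positive principal factor. Pure periodicity then follows from the classical characterization (the analog for minus expansions of Galois's theorem): a real quadratic irrational $\varpi$ has purely periodic minus continued fraction if and only if it is reduced in the above sense. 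I expect the main obstacle to be verifying that the totally positive condition on $z$ and the reduced condition on $\varpi$ can be achieved \emph{simultaneously}, which requires that units and totally positive principal scalars give enough room to correct any residual sign mismatch.

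For uniqueness up to cyclic permutation, I would compare two admissible pairs $(z,\varpi)$ and $(\tilde z,\tilde\varpi)$ arising from possibly different $\mathfrak{a} \in \c$ and deduce that $\Z + \varpi\Z$ and $\Z + \tilde\varpi\Z$ are related by multiplication by a totally positive element of $K$. This forces $\tilde\varpi = (a\varpi + b)/(c\varpi + d)$ for some matrix in $\GL_2(\Z)$ of determinant $+1$, since total positivity controls orientation. Two reduced quadratic irrationals related by such an $\mathrm{SL}_2(\Z)$-transformation are known to lie in the same cycle under the minus continued fraction map, so their periods agree up to cyclic shift. The subtlety here is tracking the narrow equivalence carefully to ensure the determinant is $+1$ rather than $\pm 1$.
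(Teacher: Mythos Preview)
The paper does not prove this lemma; immediately before stating Lemmas~\ref{contfrac} and~\ref{omega} it says ``we need a couple of lemmas which we do not prove here'' and refers to Zagier's papers. So there is no in-paper argument to compare against, and your outline is essentially a reconstruction of the standard reduction-theoretic proof found in those references.

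Your sketch is structurally sound, and you correctly isolate the one genuinely delicate point: achieving total positivity of $z$ and reducedness of $\varpi$ \emph{simultaneously}. However, the fix you propose for the signature of $z$ does not work. Multiplying $z$ by a totally positive unit, or replacing $\a$ by $(\gamma)\a$ with $\gamma\gg 0$ (which just multiplies $z$ by the totally positive element $\gamma$), leaves the signature of $z$ unchanged; neither operation can flip a sign in one archimedean place. And in the setting of the paper (Assumption~\ref{ass}), $K$ has no units of negative norm, so arbitrary units are also unavailable for this purpose. Concretely, if the naive reduction process happens to pass through a step with $\varpi_k' < 0$, the factor $\varpi_k$ flips the second coordinate of $z$, and nothing in your toolkit repairs it.

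The clean way around this, as in Zagier's treatment, is to build the reduced basis geometrically rather than by blindly iterating the continued-fraction map. One takes the convex hull of the nonzero points of $\a^{-1}\m_0$ lying in the totally positive cone; its boundary is a broken line whose consecutive vertices $A_{k-1},A_k$ are totally positive, satisfy $A_{k+1}=b_kA_k-A_{k-1}$ with $b_k\ge 2$, and form a $\Z$-basis at each step. Setting $z=1/A_{k-1}$ and $\varpi=A_k/A_{k-1}$ then gives $z\gg 0$ and $\varpi>1>\varpi'>0$ directly. Your uniqueness argument via $\mathrm{SL}_2(\Z)$-equivalence of reduced irrationals is fine once existence is in hand.
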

\subsection{Meyer's Theorem}
Curt Meyer studied the case $\m_0 = \OO_K$ in \cite{Meye}. He expressed $\varrho(\c)$ as an integral involving a logarithm of the Dedekind $\eta$-function using methods earlier applied by Erich Hecke to wide ideal classes. The advantage of considering narrow ideal classes is that if there are no units of negative norm, every wide ideal class is the disjoint union of two narrow ideal classes $\c \coprod \c^{\ast}$, and the transformation properties of the Dedekind-$\eta$ function can then be used to explicitly evaluate the difference $\varrho(\c)-\varrho(\c^{\ast})$ as $\pi^2/\sqrt{D}$ multiplied times an expression involving a Dedekind sum. Friedrich Hirzebruch and Don Zagier noticed that this expression could be written as a sum of partial quotients in a certain `minus' continued fraction.
\begin{defn}\label{thetadef}
We denote by $1+\m_0$ the set of $\alpha\in K^{\times}$ such that $\ord_\p(\alpha - 1) \geq \ord_\p(\m_0)$ for all prime ideals $\p$ of $\OO_K$ dividing $\m_0$. Define $\Theta = [(\theta)]\in C_{\m}$ where $\theta$ is any positive element of $1 +\m_0$ whose Galois conjugate $\theta'$ is negative.
For each $\c\in C_\m$, we take $\c^{\ast} = \c\Theta$.
\end{defn}
\begin{thm}[Meyer]\label{Meyer}
Suppose $\m_0 = \OO_K$ and that $K$ has no units of negative norm. In the notation of Lemma \ref{omega}, we have
\begin{align*}
\varrho(\c) - \varrho(\c^{\ast}) = \frac{\pi^2}{6\sqrt{D}}\sum_{k=1}^m (b_k-3).
\end{align*}
\end{thm}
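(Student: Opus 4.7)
The plan is to derive the formula from the Hecke-Meyer integral representation of $\varrho(\c)$ via the transformation law of Dedekind's $\eta$-function and the structure of the minus continued fraction algorithm. Concretely, let $\epsilon > 1 > \epsilon' > 0$ be the totally positive fundamental unit of $\OO_K$ and let $z_0 \in \H$ be a base point attached to $\c$ (essentially the period point $\varpi$ of Lemma \ref{omega} viewed in the upper half-plane). Hecke's formula gives
\[
\varrho(\c) = \frac{c_\m}{\sqrt{D}} - \frac{\pi}{3\sqrt{D}} \int_{z_0}^{\epsilon z_0} \log|\eta(z)|^2 \, \frac{dz}{\mathrm{Im}(z)},
\]
where $c_\m$ depends only on $\m$, not on $\c$. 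Subtracting the analogous expression for $\c^\ast$ kills $c_\m$; the relation $\c^\ast = \c\Theta$ of Definition \ref{thetadef}, combined with the change of variables induced by $\theta$, rewrites the right-hand side so that $\varrho(\c) - \varrho(\c^\ast)$ is controlled purely by the transformation behavior of $\eta$ along the integration contour.

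Next, I would use the continued fraction expansion from Lemma \ref{omega} to factor the identification of $z_0$ with $\epsilon z_0$ as a composition of elementary M\"obius transformations
\[
M_\epsilon \;=\; \prod_{k=1}^{m}\begin{pmatrix} b_k & -1 \\ 1 & \phantom{-}0 \end{pmatrix} \;\in\; SL_2(\Z),
\]
corresponding to the step $\varpi_{k-1} = b_k - 1/\varpi_k$ of the minus continued fraction algorithm. Splitting the contour from $z_0$ to $\epsilon z_0$ into $m$ subarcs (one per partial quotient) and applying Dedekind's transformation law
\[
\log\eta(Mz) = \log\eta(z) + \tfrac{1}{2}\log\!\bigl(-i(cz+d)\bigr) + \pi i \,\Phi(M)
\]
on each piece, the $\log\eta$ contributions telescope and what remains is an explicit sum of values of the Rademacher-Dedekind cocycle $\Phi$ at the $m$ elementary factors.

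Finally, I would compute $\Phi$ on each elementary matrix $\begin{pmatrix} b_k & -1 \\ 1 & 0 \end{pmatrix}$: the associated Dedekind sum $s(b_k,1)$ vanishes, leaving only the affine part $\tfrac{1}{12}(b_k - 3)$ of the cocycle. Reassembling all prefactors produces
\[
\varrho(\c) - \varrho(\c^\ast) \;=\; \frac{\pi^2}{6\sqrt{D}} \sum_{k=1}^{m} (b_k - 3),
\]
which is the claim. The main obstacle is the bookkeeping in the middle step: tracking the branch of $\log$, pinning down the exact form of the $\eta$-multiplier (whether one expresses it via $s(a,c)$ directly or via the Rademacher function $\Phi$), and ensuring that after subtraction against $\c^\ast$ the only surviving contribution per step is the affine $b_k - 3$ piece rather than a spurious Dedekind-sum term. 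The palindromic structure of $(b_1,\ldots,b_{m-1})$ from Lemma \ref{contfrac} and the cyclic invariance of $(b_0,\ldots,b_{m-1})$ from Lemma \ref{omega} provide useful internal consistency checks for any chosen representative of $\c$.
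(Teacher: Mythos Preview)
The paper does not actually prove Theorem~\ref{Meyer}; it is stated as a known result, with the integral representation attributed to Meyer (following Hecke) and the continued-fraction reformulation attributed to Hirzebruch and Zagier, with references to \cite{Meye}, \cite{Zagi}, \cite{Zagi2}. Your outline---Hecke--Meyer integral for $\varrho(\c)$ in terms of $\log|\eta|$, subtraction against $\c^{\ast}$, factorization of the period matrix via the minus continued fraction, and evaluation of the $\eta$-multiplier on each elementary factor to produce the $(b_k-3)/12$ pieces---is precisely the argument the paper sketches in the paragraph preceding the theorem, so you are in complete agreement with the intended approach.
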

\subsection{Yamamoto's Theorem}
Meyer's theorem is sufficient to derive the known formula for class numbers, and there are generalizations which compute $\varrho(\c)$, $\varrho(\c^{\ast})$ for an arbitrary $\m_0$. For example, Shuji Yamamoto proved such a Kronecker limit formula for narrow ray classes in \cite{Yama}; he further computed $\zeta(0,\c)$, $\zeta(0,\c^{\ast})$ using the methods of Shintani in \cite{Shin}. We will find it more convenient to use these computations at $s=0$ because they are rational numbers and have considerably simpler descriptions in the general case.
\begin{defn}\label{c_k}
Let $\c\in C_\m$. Choose $\a$, $z$, $\varpi=[[\overline{b_0; b_1, \ldots, b_{m-1}}]]$ as in Lemma \ref{omega}. There are unique rational numbers $c_{-2}, c_{-1} \in [0,1)$ such that
\begin{align}\label{initial}
c_{-2}-c_{-1}\varpi-z \in \Z + \varpi\Z,
\end{align}
so if we extend $b_k = b_{k+m}$ by periodicity, we may recursively define $c_k \in [0,1)$ for all integers $k \geq 0$ by
\begin{align}\label{recursive}
c_k = \left\{b_kc_{k-1} - c_{k-2}\right\}
\end{align}
where $\left\{x\right\} = x - \lfloor x\rfloor$ denotes the fractional part of a real number $x$.
\end{defn}
\begin{defn}\label{unit}
Let $U_+$ denote the totally positive units in $\OO_K$ with generator $\varepsilon >1$. Take $\varepsilon_\m$ to be the unique generator of $U_+\cap(1+\m_0)$ which is greater than $1$, so $\varepsilon_\m = \varepsilon^r$ for some nonnegative integer $r$.
\end{defn}
\begin{thm}[Yamamoto]\label{Yamamoto}
Let $\c \in C_\m$. Then
\begin{align}\label{value}
\zeta(0,\c)= -\zeta(0,\c^{\ast}) = \sum_{k=1}^{rm} \left(\frac{b_k}{2}B_2(c_{k-1}) - B_1(c_{k-1})B_1(c_{k-2})  \right)
\end{align}
where $B_1(x) = x-\frac{1}{2}$, $B_2(x) = x^2-x+\frac{1}{6}$ are Bernoulli polynomials and $b_k$, $m$, $c_k$, $r$ are as in Definitions \ref{c_k}, \ref{unit}.
\end{thm}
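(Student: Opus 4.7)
My plan is to apply Shintani's method: decompose a fundamental domain for the $\varepsilon_\m$-action on the totally positive real orthant into simplicial cones adapted to the minus continued fraction of $\varpi$, and evaluate the resulting cone zeta functions at $s=0$ in terms of Bernoulli polynomials. The identity $\zeta(0,\c) = -\zeta(0,\c^\ast)$ will follow separately from the functional equation for Hecke $L$-functions.

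First, using Lemma \ref{omega}, rewrite $\zeta(s,\c)$ as a sum over totally positive elements of the shifted lattice $(c_{-2} - c_{-1}\varpi) + \Z + \varpi\Z$ modulo the action of $\varepsilon_\m$: integral ideals $\a \in \c$ are in bijection with $\varepsilon_\m$-orbits of totally positive $\xi$ in this lattice via $\a = (z\xi^{-1})\m_0$, and the initial condition in Equation (\ref{initial}) is precisely what makes $(c_{-2}, c_{-1})$ the correct initial shift. Then construct the sequence $v_k \in \Z + \varpi\Z$ satisfying $v_k = b_k v_{k-1} - v_{k-2}$, with starting values chosen so that every $v_k$ is totally positive. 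The pure periodicity of $\varpi$ with minimal period $m$ forces $v_m = \varepsilon\cdot v_0$ for the fundamental totally positive unit $\varepsilon$, and iterating gives $v_{rm} = \varepsilon_\m \cdot v_0$; consequently the simplicial cones $C_k = \R_{>0} v_{k-1} + \R_{>0} v_k$ for $k = 1, \ldots, rm$ tile a fundamental domain for the $\varepsilon_\m$-action on $\R_{>0}\times\R_{>0}$.

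Next, apply Shintani's closed-form evaluation at $s=0$ of the two-dimensional cone zeta function with shift. A change-of-basis argument using $v_k = b_k v_{k-1} - v_{k-2}$ shows by induction that the Shintani shift for $C_k$, expressed in the basis $(v_{k-1}, v_k)$ and reduced modulo $\Z$, has components $c_{k-2}$ and $c_{k-1}$; the recurrence $c_k = \{b_k c_{k-1} - c_{k-2}\}$ of Definition \ref{c_k} is exactly what transforms the shift from $C_k$ to $C_{k+1}$. Shintani's formula then produces the contribution $\frac{b_k}{2}B_2(c_{k-1}) - B_1(c_{k-1})B_1(c_{k-2})$ from $C_k$, and summing over $k = 1, \ldots, rm$ gives the stated formula for $\zeta(0,\c)$.

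For the sign equality $\zeta(0,\c^\ast) = -\zeta(0,\c)$, observe that $\zeta(s,\c) + \zeta(s,\c^\ast)$ is the partial zeta attached to the common image of $\c$ and $\c^\ast$ in the wide ray class group of modulus $\m_0$ (since $\Theta$ dies there), which by character orthogonality is an average of $L(s,\chi)$ over wide ray-class characters $\chi$. Each such $\chi$ is even at both real places of $K$, so the gamma factor $\Gamma_\R(s)^2$ in the completed $L$-function forces $L(0,\chi) = 0$ by the Hecke functional equation; hence the wide partial zeta vanishes at $s=0$ and the sign equality follows. The main obstacle is the combinatorial verification that the cones $C_k$ genuinely tile a fundamental domain — in particular, that successive $v_{k-1}, v_k$ remain totally positive with directions rotating monotonically toward the ray determined by $\varpi'$ — together with the precise induction identifying the Shintani shift with $(c_{k-2}, c_{k-1})$; these are the technical core of Yamamoto's argument in \cite{Yama} (building on \cite{Shin}), to which I would defer for the full bookkeeping.
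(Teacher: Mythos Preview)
The paper does not supply its own proof of this theorem: it is stated as a result of Yamamoto and simply cited to \cite{Yama}, with the remark that Yamamoto computed $\zeta(0,\c)$ and $\zeta(0,\c^{\ast})$ ``using the methods of Shintani in \cite{Shin}.'' Your proposal is a faithful outline of precisely that Shintani--Zagier--Yamamoto approach---cone decomposition of a fundamental domain for the $\varepsilon_\m$-action via the minus continued fraction, followed by Shintani's evaluation of two-dimensional cone zeta functions at $s=0$---and you yourself defer the combinatorial bookkeeping to \cite{Yama}. So there is nothing to compare: your sketch and the paper point to the same source and the same method.
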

\section{The Formula for Class Numbers}
In this section, we use the results stated above to outline a proof of the formula for class numbers due to Hirzebruch and Zagier. We do this in order to motivate the formula for Iwasawa lambda invariants.

We say $\chi$ is a genus character when $\chi$ is a real valued character on $C_\m$ with $\m_0=\OO_K$. In this case, we can use either Meyer's Theorem \ref{Meyer} or Yamamoto's Theorem \ref{Yamamoto} to compute class numbers by factoring $L(s,\chi)$ into the product of Dirichlet $L$-functions.
\begin{thm}[Kronecker]\label{Kronecker}
Let $t$ denote the number of distinct prime factors of $D$. Then there are exactly $2^{t-1}-1$ ways to factor $D=D_1D_2$ up to order as in Eq. \ref{factor}. Such factorizations are in bijection with the set of nontrivial genus characters $\chi$. Under this correspondence,
\begin{align}\label{factorization}
L(s, \chi) = L(s, \epsilon_1)L(s, \epsilon_2)
\end{align}
where each $\epsilon_i$ is the quadratic character of $K_i/\Q = \Q(\sqrt{D_i})/\Q$.
\end{thm}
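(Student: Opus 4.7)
My plan is to treat the three assertions in sequence: count the factorizations of $D$, count the nontrivial genus characters, and match Euler factors in the $L$-function identity.

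For the counting, I would use the fact that every fundamental discriminant factors uniquely as a product of \emph{prime discriminants} --- the values $q^* = (-1)^{(q-1)/2}q$ for odd primes $q$, together with $-4$, $8$, or $-8$ at the prime $2$. Since $D$ has $t$ distinct prime factors, it has $t$ prime discriminant divisors; an ordered factorization $D=D_1D_2$ into nontrivial fundamental discriminants corresponds to a nontrivial proper subset of this $t$-element set, and unordering yields $(2^t-2)/2 = 2^{t-1}-1$. On the character side, classical genus theory for real quadratic fields gives $[C_\m : C_\m^2] = 2^{t-1}$ when $\m_0 = \OO_K$, so the dual group has $2^{t-1}-1$ nontrivial real-valued elements, matching the factorization count.

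To construct the bijection explicitly, for each factorization $D=D_1D_2$ I would define $\chi$ on a prime ideal $\p$ of $\OO_K$ lying above a rational prime $p$ by the Kronecker symbol $\chi(\p) = \left(\frac{D_i}{p}\right)$, where $i$ is chosen so that $p \nmid D_i$; the two choices agree when $p \nmid D$, and the choice is forced at ramified primes since $D_1$ and $D_2$ share no prime factors. Extending multiplicatively and verifying that $\chi$ is trivial on principal ideals of totally positive generators produces a nontrivial real-valued character of $C_\m$. Distinct factorizations yield distinct characters (they disagree at any prime $p$ whose prime-discriminant component is moved from $D_1$ to $D_2$), so the bijection follows by matching cardinalities.

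For the $L$-function identity, I would compare Euler factors at each rational prime $p$, using $\epsilon_1(p)\epsilon_2(p) = \left(\frac{D}{p}\right)$ to track splitting behavior in $K$. If $p$ splits in $K$ as $\p\p'$ with $p \nmid D$, then $\chi(\p) = \chi(\p') = \epsilon_1(p) = \epsilon_2(p)$, and both sides contribute $(1-\epsilon_1(p)p^{-s})^{-2}$; if $p$ is inert with $N\p = p^2$, then $\epsilon_2(p) = -\epsilon_1(p)$ and both sides collapse to $(1-p^{-2s})^{-1}$; if $p \mid D$ with (say) $p \mid D_1$, then the ramified prime $\p$ satisfies $\chi(\p) = \epsilon_2(p)$ and $\epsilon_1(p)=0$, so both sides yield $(1-\epsilon_2(p)p^{-s})^{-1}$. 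The main obstacle is establishing well-definedness of $\chi$ on $C_\m$: one must verify $\chi((\alpha)) = 1$ for every totally positive $\alpha \in \OO_K$, which reduces to the product formula for the Kronecker symbol --- essentially quadratic reciprocity and its supplementary laws --- applied to $N(\alpha)$ in relation to $D_1$ and $D_2$. Once well-definedness is in place, the remaining steps are routine bookkeeping on Dirichlet series.
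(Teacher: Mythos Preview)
The paper does not actually supply a proof of Theorem~\ref{Kronecker}; it is quoted as a classical result of Kronecker and used as input to the Hirzebruch--Zagier class-number formula. So there is no proof in the paper to compare your approach against. Your outline is the standard one found in references on genus theory (e.g.\ Siegel's lectures or Zagier's \emph{Zetafunktionen und quadratische K\"orper}), and the overall architecture---count factorizations via prime discriminants, count characters via $|C_\m/C_\m^2|=2^{t-1}$, then match Euler factors---is exactly right.

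There is, however, a genuine slip in your explicit definition of $\chi$. You set $\chi(\p)=\bigl(\tfrac{D_i}{p}\bigr)$ for $p\nmid D_i$ and assert that ``the two choices agree when $p\nmid D$.'' This fails at inert primes: if $p\nmid D$ then $\bigl(\tfrac{D_1}{p}\bigr)\bigl(\tfrac{D_2}{p}\bigr)=\bigl(\tfrac{D}{p}\bigr)$, so the two values coincide precisely when $p$ splits and disagree when $p$ is inert. The correct definition uses the \emph{norm}: $\chi(\p)=\bigl(\tfrac{D_i}{N\p}\bigr)$. At an inert prime $N\p=p^2$, whence $\bigl(\tfrac{D_1}{p^2}\bigr)=\bigl(\tfrac{D_2}{p^2}\bigr)=1$ and the ambiguity disappears; this is also what makes your inert Euler-factor computation $(1-\chi(\p)p^{-2s})^{-1}=(1-p^{-2s})^{-1}$ come out correctly. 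With this fix in place your case analysis goes through, and the well-definedness check on totally positive principal ideals (which you correctly flag as the substantive step) is indeed the quadratic-reciprocity computation you describe.
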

\begin{thm}[Hirzebruch] Make Assumption \ref{ass}. Then
\begin{align}\label{classnumber}
h(D_1)h(D_2) =  \frac{w_1w_2}{24} \sum_{k=1}^m(b_k-3) 
\end{align}
where $(\delta+\sqrt{D})/2 = [[b_0; \overline{b_1, b_2, \ldots, b_m}]]$ as in Lemma \ref{contfrac} and each $w_i$ is the number of roots of unity in $K_i$.
\end{thm}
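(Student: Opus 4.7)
The plan is to evaluate $L(1,\chi)$ in two different ways, where $\chi$ is the unique nontrivial genus character attached to the factorization $D = D_1D_2$ of Remark~\ref{remark}. Under Assumption~\ref{ass} the wide class number of $K$ equals $1$, and since $K$ has no unit of negative norm the narrow class number is $2$. Hence with $\m_0 = \OO_K$ the group $C_\m$ consists of exactly two classes $\c$ and $\c^\ast = \c\Theta$, on which the nontrivial character $\chi$ takes the values $1$ and $-1$ respectively. This is the only genus character one needs to consider, because Assumption~\ref{ass} together with Remark~\ref{remark} forces $t=2$ in Kronecker's Theorem~\ref{Kronecker}.

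On the analytic side, Theorem~\ref{Kronecker} gives $L(s,\chi) = L(s,\epsilon_1)L(s,\epsilon_2)$, and since $D_1, D_2 < 0$ both $K_i$ are imaginary quadratic. The Dirichlet analytic class number formula for each imaginary quadratic field then yields
\[ L(1,\epsilon_i) = \frac{2\pi\, h(D_i)}{w_i \sqrt{|D_i|}}. \]
Multiplying the two values and using $|D_1|\cdot|D_2| = D$ produces
\[ L(1,\chi) = \frac{4\pi^2\, h(D_1)h(D_2)}{w_1 w_2 \sqrt{D}}. \]

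On the geometric side, from the definition of $L(s,\chi)$ at $s=1$ we have
\[ L(1,\chi) = \chi(\c)\varrho(\c) + \chi(\c^\ast)\varrho(\c^\ast) = \varrho(\c) - \varrho(\c^\ast), \]
and Meyer's Theorem~\ref{Meyer} evaluates this difference as $(\pi^2/(6\sqrt{D})) \sum_{k=1}^m(b_k - 3)$. Setting the analytic and geometric expressions for $L(1,\chi)$ equal and canceling the common factor $\pi^2/\sqrt{D}$ produces Eq.~\ref{classnumber}.

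The one step requiring care is the identification of the two sets of partial quotients: Meyer's $b_k$ come from the purely periodic expansion of the $\varpi$ produced by Lemma~\ref{omega} applied to the trivial narrow class (so taking $\mathfrak{a} = \OO_K$), while the $b_k$ in the statement come from the expansion of $(\delta+\sqrt{D})/2$ itself. I would reconcile these by exhibiting a totally positive $z \in K$ with $(z) = \Z + \varpi\Z$ and $\varpi > 1 > \varpi' > 0$, then verifying that such a $\varpi$ is obtained from $(\delta+\sqrt{D})/2$ by discarding the non-periodic prefix $b_0$. The resulting periodic blocks of length $m$ then agree up to cyclic permutation, and since $\sum_{k=1}^m (b_k - 3)$ is invariant under cyclic permutation this suffices. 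This matching is the genuine content of the argument; everything else is bookkeeping of the class-number formula against Meyer's theorem.
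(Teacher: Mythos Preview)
Your proposal is correct and matches the paper's approach: evaluate $L(1,\chi)$ analytically via Kronecker's factorization and the class number formula, geometrically via Meyer's theorem, then equate. The paper carries out your final matching step explicitly by taking $\a=\OO_K$, $z=1$, and $\varpi=(2b_0-\delta+\sqrt{D})/2$, whose purely periodic block is $(2b_0-\delta,b_1,\ldots,b_{m-1})=(b_m,b_1,\ldots,b_{m-1})$ by Lemma~\ref{contfrac}; it also notes in passing that the identical conclusion follows at $s=0$ from Yamamoto's theorem.
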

\begin{proof}[Proof]
Take $\m_0 = \OO_K$. By Remark \ref{remark} and Theorem \ref{Kronecker}, there is a unique nontrivial genus character $\chi$ on $C_\m$, so Eq. \ref{factorization} and the analytic class number formula imply
\begin{align}\label{intermediate}
L(1, \chi) = \frac{2\pi h(D_1)}{w_1\sqrt{-D_1}} \cdot \frac{2\pi h(D_2)}{w_2\sqrt{-D_2}}= \frac{4\pi^2}{w_1w_2\sqrt{D}}h(D_1)h(D_2)
\end{align}
or via functional equations
\begin{align}\label{intermediate2}
L(0, \chi) = \frac{2 h(D_1)}{w_1} \cdot \frac{2 h(D_2)}{w_2}= \frac{4}{w_1w_2}h(D_1)h(D_2).
\end{align}
Here we have simply $C_\m = \{\Theta, \Theta^{\ast}\}$. Consider the trivial class $\c = \Theta^{\ast}$ in the context of Lemma \ref{omega}: we can choose $\a = \OO_K$, $z=1$, and
\begin{align*}
\varpi = \frac{2b_0 -\delta + \sqrt{D}}{2} = [[\overline{2b_0 - \delta, b_1, b_2, \ldots, b_{m-1} }]]
\end{align*}
via Lemma \ref{contfrac}. It is also clear that $c_{-2} = c_{-1} = 0$ in Lemma \ref{c_k}, so $c_k=0$ for all $k$. Thus Meyer's Theorem \ref{Meyer} implies
\begin{align}\label{one}
L(1,\chi) = \varrho(\Theta^{\ast}) - \varrho((\Theta^{\ast})^{\ast}) = \frac{\pi^2}{6\sqrt{D}}\sum_{k=1}^m(b_k-3)
\end{align}
and Yamamoto's Theorem \ref{Yamamoto} implies
\begin{align}\label{two}
L(0,\chi) = \zeta(0,\Theta^{\ast}) - \zeta(0,(\Theta^{\ast})^{\ast})= \frac{1}{6}\sum_{k=1}^m(b_k-3).
\end{align}
Combining either Eq. \ref{one} with Eq. \ref{intermediate} or Eq. \ref{two} with Eq. \ref{intermediate2} will both yield the desired result.
\end{proof}

\section{The Formula for Iwasawa Lambda Invariants}\label{abovesection}

Fix a prime $p$ and number field $F$. Let $F_{\infty}$ denote the cyclotomic $\Z_p$-extension\footnote{We will not consider any non-cyclotomic $\Z_p$-extensions in this paper.} of $F$, i.e., $F_{\infty}$ is the unique subfield of
\begin{align*}\bigcup_{n\geq 1}F(\zeta_{p^n}) \subseteq\overline{\Q}\end{align*}
such that $\Gal(F_{\infty}/F)$ is isomorphic to the group $\Z_p$ of $p$-adic integers where $\overline{\Q}$ is some fixed algebraic closure and each $\zeta_{p^n}$ a primitive $p^n$th root of unity. The subfields of $F_{\infty}$ which contain $F$ all lie in a tower
\begin{align*}
F \subset F_1 \subset F_2 \subset \ldots \subset F_{\infty}
\end{align*}
where
\begin{align*}
\Gal(F_n/F) \cong \Z/(p^n) \mbox{ for all } n \geq 1.
\end{align*}
The $p$-parts of the class numbers of these intermediate fields become regularly behaved.
\begin{thm}[Iwasawa's Growth Formula]
There are integers $\lambda_p(F)$, $\mu_p(F)$, $\nu_p(F)$ such that class numbers $h_n$ of $F_n$ satisfy
\begin{align}\label{growth}
\ord_p(h_n) = \lambda_p(F)n + \mu_p(F)p^n+\nu_p(F)
\end{align}
for all sufficiently large $n$ where $\ord_p$ denotes the $p$-adic order.
\end{thm}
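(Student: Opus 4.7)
The plan is to translate the question about the tower $\{F_n\}$ into a question about a single module over the Iwasawa algebra and then invoke a structure theorem. Set $\Gamma = \Gal(F_\infty/F) \cong \Z_p$, choose a topological generator $\gamma$, and identify the completed group algebra $\Lambda = \Z_p[[\Gamma]]$ with the formal power series ring $\Z_p[[T]]$ via $\gamma \leftrightarrow 1+T$. Writing $A_n$ for the $p$-Sylow subgroup of the ideal class group of $F_n$, form
\begin{align*}
X = \varprojlim_n A_n
\end{align*}
with respect to the norm maps $A_{n+1} \to A_n$. Class field theory identifies $X$ with $\Gal(L_\infty/F_\infty)$, where $L_\infty$ is the maximal unramified abelian pro-$p$ extension of $F_\infty$; lifting $\gamma$ to $\Gal(L_\infty/F)$ and conjugating extends by continuity to a $\Lambda$-module structure on $X$.

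Next I would prove that $X$ is a finitely generated torsion $\Lambda$-module. Compactness of $X$ together with finiteness of $X/(p,T)X$ (a quotient of $A_0/pA_0$) yields finite generation via topological Nakayama. For torsion, observe that the co-invariants $X/(\gamma-1)X$ are finite by class field theory (they map onto $A_0$ with finite kernel); since any nonzero free $\Lambda$-summand of $X$ would contribute an infinite group $\Z_p$ to these co-invariants, $X$ must be $\Lambda$-torsion. The Iwasawa structure theorem then yields a pseudo-isomorphism
\begin{align*}
X \;\sim\; \bigoplus_{i=1}^{s}\Lambda/(p^{m_i}) \;\oplus\; \bigoplus_{j=1}^{t}\Lambda/(f_j(T)^{e_j})
\end{align*}
with distinguished polynomials $f_j$, and one defines $\mu_p(F) = \sum_i m_i$ and $\lambda_p(F) = \sum_j e_j \deg f_j$.

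The third step is to recover finite layer data from $X$. Let $\omega_n = (1+T)^{p^n} - 1$, which generates the ideal of $\Lambda$ corresponding to $\Gal(F_\infty/F_n)$. A Snake Lemma argument based on the class field theoretic description of $X$, combined with the fact that every prime ramifying in $F_\infty/F$ becomes totally ramified from some level $n_0$ onward, shows that the natural map $X/\omega_n X \to A_n$ has kernel and cokernel of order bounded uniformly in $n \geq n_0$. Hence $\ord_p|A_n|$ and $\ord_p|X/\omega_n X|$ differ by an $O(1)$ term.

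Finally, a direct computation on the elementary module gives $|\Lambda/(p^m,\omega_n)| = p^{m p^n}$, while Weierstrass preparation yields $|\Lambda/(f^e,\omega_n)| = p^{e\,\deg(f)\,n + c_f}$ for all $n$ past some threshold, with $c_f$ independent of $n$. Summing these contributions across the elementary summands and absorbing all bounded errors into a single integer $\nu_p(F)$ produces Eq. \ref{growth}. I expect the main obstacle to be the bookkeeping in the third step: controlling the deviation between $A_n$ and $X/\omega_n X$ uniformly in $n$ requires careful use of the class field theoretic description of $X$ together with the ramification structure of the tower, which is where the subtlety of the passage between infinite and finite levels actually lives.
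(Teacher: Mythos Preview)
Your outline is the standard textbook proof of Iwasawa's growth formula (essentially the argument in Washington's \emph{Introduction to Cyclotomic Fields}, Chapter~13, or Iwasawa's original papers), and as a sketch it is correct. Note, however, that the paper does not supply a proof of this theorem at all: it is stated as background, attributed to Iwasawa, and then used. So there is no ``paper's own proof'' to compare against here.

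A couple of small points on your sketch, for accuracy. First, the claim that $X/(\gamma-1)X$ ``maps onto $A_0$ with finite kernel'' is not quite the right formulation in general; the clean identification $X_{\Gamma_n} \cong A_n$ holds only when there is a unique prime of $F$ above $p$ and it is totally ramified in $F_\infty/F$. In the general case one passes to a level $n_0$ where every ramified prime is totally ramified and works with $\nu_{n,n_0} = \omega_n/\omega_{n_0}$ rather than $\omega_n$, picking up a correction term governed by the number $s$ of ramified primes (this is exactly the bookkeeping you flag in your last paragraph). Second, your Nakayama step is fine, but the finiteness input you actually need is that $X/((1+T)^{p^{n_0}}-1, p)X$ is finite for \emph{some} $n_0$, which follows from the description of $X/Y_{n_0}$ in terms of $A_{n_0}$; the case $n_0=0$ is not always available. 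None of this affects the validity of the overall strategy.
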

Here is a short list of what is known and conjectured about the Iwasawa invariants $\lambda$, $\mu$, $\nu$ which appear in the growth formula:
\begin{itemize}
\item Iwasawa conjectured that $\mu_p(F)=0$ for all $p$ and $F$. 
\item Lawrence Washington and Bruce Ferrero proved that $\mu_p(F)=0$ for all $p$ when $F/\Q$ is abelian (see \cite{Ferr2}).
\item If $F$ has only one prime lying over $p$ and $p$ does not divide the class number of $F$, then $\lambda_p(F)=\mu_p(F)=\nu_p(F)=0$ (see \cite{Iwas5}).
\item If $p$ splits completely in $F$, then $\lambda_p(F)\geq r_2$ where $r_2$ is the number of complex places of $F$ (see, e.g., \cite{Gree2}).
\item Greenberg conjectured that $\lambda_p(F)=0$ for all primes $p$ when $F$ is a totally real number field (see \cite{Gree3}).
\end{itemize}
Suppose now that $F$ is a quadratic number field of discriminant $\Delta$, and write
\begin{align*}
\lambda_p(\Delta) \mathrel{\mathop :}= \lambda_p(F),\hspace{0.2 in} \mu_p(\Delta) \mathrel{\mathop :}= \mu_p(F).
\end{align*}
Thus we always have $\mu_p(\Delta) = 0$, and conjecturally $\lambda_p(\Delta) = 0$ when $\Delta>0$. Assume now that $\Delta<0$. Then $\lambda_p(\Delta) \geq 1$ for infinitely many primes $p$. In fact, it is conjectured that $\lambda_p(\Delta)$ is bounded for fixed $\Delta$ and unbounded for fixed $p$. Bruce Ferrero (see \cite{Ferr}) and Y\^{u}ji Kida (see \cite{Kida2}) proved that for $-4\neq \Delta \neq -8$ we have
\begin{align}\label{above}
\lambda_2(\Delta) = -1 + \mathop{\sum_{\ell | \Delta}}_{\ell\neq 2} 2^{\ord_2(\ell^2-1) - 3}
\end{align}
where the sum ranges over all odd primes $\ell$ dividing $\Delta$. In particular, this shows that $\lambda_2(\Delta)$ is unbounded. For odd $p$, there seems to be no simple formula like \ref{above} to compute $\lambda_p(\Delta)$. We will derive a formula for $\lambda_p(D_1)+\lambda_p(D_2)$ under Assumption \ref{ass} which is analogous to the formula \ref{classnumber} for class numbers. We first need to recall how the lambda invariant in the growth formula \ref{growth} is related to special values of $L$-functions. We assume here that $p\nmid \Delta$ and $p$ is odd for simplicity. Let $\epsilon$ denote the quadratic character for $F/\Q = \Q(\sqrt{\Delta})/\Q$. For each integer $n\geq 1$, choose a Dirichlet character $\psi_n$ which generates the $n$th level $\Q_n \subseteq \Q(\zeta_{p^{n+1}})$ in the cyclotomic $\Z_p$-extension of $\Q$. In particular, $\psi_n$ has conductor $p^{n+1}$ and order $p^n$. By a theorem of Kubota and Leopoldt, there is a $p$-adic analytic function $L_p(s, \epsilon\psi_n\omega)$ on the disk $|s| < p^{(p-2)/(p-1)}$ in $\mathbb{C}_p$ such that
\begin{align*}
L_p(1-m, \epsilon\psi_n\omega) = (1-\epsilon\psi_n\omega^{1-m}(p)p^{m-1})L(1-m, \epsilon\psi_n \omega^{1-m})
\end{align*}
for all integers $m \geq 1$ where $\omega$ is the Teichm\"{u}ller character. In fact, there is an interpolating power series $f(T, \epsilon\omega) \in \Z_p[[T]]$ such that
\begin{align*}
L_p(s, \epsilon\psi_n\omega) = f(\zeta_{p^n}(1-p\Delta)^s-1, \epsilon\omega)
\end{align*}
for all $s\in \Z_p$ where $\zeta_{p^n} = \psi_n(1-p\Delta)^{-1}$ is a primitive $p^n$th root of unity. Setting $0=1-m=s$ in the previous two equations gives
\begin{align}\label{f}
f(\zeta_{p^n}-1, \epsilon\omega) = L_p(0,  \epsilon\psi_n\omega) = L(0, \epsilon\psi_n).
\end{align}
We define lambda and mu invariants of the power series
\begin{align*}
f(T, \epsilon\omega)= a_0+a_1T+a_2T^2+a_3T^3+\cdots
\end{align*}
as follows
\begin{align*}
\mu(f) &\mathrel{\mathop:}= \min\{\ord_p(a_i): i\geq 0\}  \\
\lambda(f) &\mathrel{\mathop:}= \min\{i\geq 0 : \ord_p(a_i) = \mu(f) \}.
\end{align*}
On can use Eq. \ref{f} to prove the growth formula \ref{growth} for $F = \Q(\sqrt{\Delta})$ (see \cite{Sinn}), and, in fact,
\begin{align*}
\mu(f) &= 0 \\
\lambda(f) &= \lambda_p(\Delta).
\end{align*}
Here we are using the assumption that $p$ is odd; we get a different computation for $\mu(f)$ when $p=2$.
We compute
\begin{align*}
\ord_p L(0, \epsilon\psi_n) &= \ord_p (a_0+a_1(\zeta_{p^n}-1)+a_2(\zeta_{p^n}-1)^2 + \cdots ) \\
&\geq \min\{\ord_p(a_i(1-\zeta_{p^n})^i) :i\geq 0 \} \\
&= \min\left\{\ord_p(a_i) +\frac{i}{\varphi(p^n)} : i\geq 0 \right\}
\end{align*}
where $\varphi$ is the Euler totient function. The inequality is an equality if the minimum is assumed by exactly one member of the set. In particular, 
\begin{align*}
\ord_p L(0, \epsilon\psi_n) = \frac{\lambda_p(\Delta)}{\varphi(p^n)}
\end{align*}
whenever $\varphi(p^n)>\lambda_p(\Delta)$. Note that we always have
\begin{align*}
\ord_p (a_0+a_1(\zeta_{p^n}-1)+\cdots + a_{\lambda_p(\Delta)-1}(\zeta_{p^n}-1)^{\lambda_p(\Delta)-1}) \geq 1
\end{align*}
and
\begin{align*}
\ord_p (a_{\lambda_p(\Delta)}(\zeta_{p^n}-1)^{\lambda_p(\Delta)} +  a_{\lambda_p(\Delta)+1}(\zeta_{p^n}-1)^{\lambda_p(\Delta)+1} + \cdots  ) = \frac{\lambda_p(\Delta)}{\varphi(p^n)}.
\end{align*}
Thus letting $\p_n=(1-\zeta_{p^n})$ denote the unique prime ideal lying above $p$ in $\Z[\zeta_{p^n}]$, we get
\begin{align}\label{ideal}
\notag \lambda_p(\Delta) &= \varphi(p^n)\ord_{p} L(0, \epsilon\psi_n) \\
&= \ord_{\p_n} L(0, \epsilon\psi_n)
\end{align}
whenever $\ord_{\p_n} L(0, \epsilon\psi_n)< \varphi(p^n)$. There is a partial converse to this statement which follows from the same observations; namely, if $\ord_{\p_n} L(0, \epsilon\psi_n) \geq \varphi(p^n)$, then we must also have $\lambda_p(\Delta) \geq \varphi(p^n)$.

At this point, we should remark that the special values $L(0, \epsilon\psi_n)$ can be computed with generalized Bernoulli numbers via the formula
\begin{align*}
L(1-m, \epsilon\psi_n) = -\frac{B_{m, \epsilon\psi_n}}{m} \mbox{ for all integers }m \geq 1.
\end{align*}
In particular,
\begin{align*}
L(0, \epsilon\psi_n) &= -B_{1,\epsilon\psi_n} = -\frac{1}{\f}\sum_{a=1}^{\f}\epsilon(a)\psi_n(a)a.
\end{align*}
where $\f = -\Delta p^{n+1}$ is the conductor of $\epsilon\psi_n$. This shows that $L(0, \epsilon\psi_n)$ is an algebraic integer by the work of Carlitz in \cite{Carl}. However, we will compute this special value in a different way by using Yamamoto's Theorem \ref{Yamamoto}.

~

Factor $D = D_1D_2$ as in Eq. \ref{factor}, and suppose each $D_i<0$. Then Eq. \ref{ideal} implies that for sufficiently large $n$ (which we fix for the following discussion) we have
\begin{align}\label{firstpart}
\lambda_p(D_1)+\lambda_p(D_2) &= \ord_{\p_n}L(0, \epsilon_1\psi_n) + \ord_{\p_n} L(0, \epsilon_2\psi_n)\\
&=\ord_{\p_n} L(0, \chi_n) \notag \\
&= \ord_{\p_n} \sum_{\c \in C_\m} \chi_n(\c) \zeta(0,\c) \notag
\end{align}
where
\begin{align*}
L(s, \chi_n) = L(s, \epsilon_1\psi_n)L(s, \epsilon_2\psi_n)
\end{align*}
is the $L$-function for a character $\chi_n$ on the narrow ray class group $C_\m$ of the real quadratic number field $K = \Q(\sqrt{D})$ with modulus $\m = (p^{n+1})\m_{\infty}$. For a prime ideal $\q$ of $\OO_K$ with $\q \cap \Z = (q)$ and $q \neq p$ we have
\begin{align}\label{defn}
\chi_n(\q) = \chi(\q)\psi_n(q^f)
\end{align}
where $\chi$ is the nontrivial genus character associated to the factorization $D = D_1D_2$ and $f$ is the residue degree of $\q/q$. Thus for a nonzero ideal $I$ in $\OO_K$ we have $\chi_n(I) = \chi(I)\psi_n(N(I))$ where $N(I)$ is the absolute norm of $I$, so $\chi_n((a)) = \psi_n(a^2)$ for all $a\in\Z$. Suppose now that $D$ is divisible by a prime congruent to 3 modulo 4. Then the narrow ray class group $C_\m$ is an internal direct product $C_\m^+ \times \<\Theta\>$ where $\Theta$ is as in Definition \ref{thetadef} and $C_\m^+$ is the kernel of the natural homomorphism $C_\m \rightarrow C_{\m_{\infty}} \cong \Z/(2)$. We have  $\chi_n(\Theta) = -1$, so $\chi_n(\c)\zeta(0,\c) = \chi_n(\c^{\ast})\zeta(0,\c^{\ast})$ for all $\c\in C_\m$. Thus since $\ord_{\p_n}(2)=0$ we get
\begin{align}\label{C+}
\lambda_p(D_1)+\lambda_p(D_2) = \ord_{\p_n} \sum_{\c \in C_\m^+} \chi_n(\c) \zeta(0,\c).
\end{align}
If, additionally, the class number of $K$ is 1, we have an exact sequence
\begin{align}\label{exactseq}
U_+ = \<\varepsilon\>\rightarrow (\OO_K/(p^{n+1}))^{\times} \rightarrow C_\m^+ \rightarrow 0
\end{align}
where the first map sends the fundamental unit $\varepsilon$ to its congruence class $\overline{\varepsilon}$ modulo $p^{n+1}$ and the second map sends the congruence class $\overline{\alpha}$ modulo $p^{n+1}$ of a totally positive $\alpha \in \OO_K$ to the class $[(\alpha)]\in C_{\m}^+$ of the principal ideal $(\alpha) \subseteq \OO_K$. (Note that every congruence class modulo $p^{n+1}$ has a totally positive representative, and any two such representatives for the same congruence class will generate the same narrow ray class.) Consider such a class $[(\alpha)]$ in the context of Lemma \ref{omega} with $\m_0=(p^{n+1})$; we may take $\a = (\alpha)$ and $z = \alpha/p^{n+1}$ so that
\begin{align*}
(z)\a^{-1}\m_0 = \OO_K = \Z+\varpi\Z
\end{align*}
with
\begin{align*}
\varpi = \frac{2b_0 -\delta + \sqrt{D}}{2} = [[\overline{2b_0 - \delta, b_1, b_2, \ldots, b_{m-1} }]]
\end{align*}
where $(\delta+\sqrt{D})/2 = [[b_0, \overline{b_1, \ldots, b_m}]]$ as in Lemma \ref{contfrac}. Write
\begin{align*}
\alpha = x + y \frac{\delta+\sqrt{D}}{2}
\end{align*}
with $x,y\in\Z$. Define
\begin{align*}
c_{-2} = \left\{ \frac{x-(b_0-\delta)y}{p^{n+1}}\right\}  \hspace{0.1 in}\mbox{ and } \hspace{0.1 in} c_{-1} = \left\{\frac{-y}{p^{n+1}}\right\},
\end{align*}
so that condition \ref{initial} is satisfied. As per Eq. \ref{recursive}, we have
\begin{align*}c_0 = \{(2b_0-\delta)c_{-1}-c_{-2}\} =  \left\{\frac{-(x+b_0y)}{p^{n+1}}\right\},\end{align*}
and it follows that for all $k \geq -1$
\begin{align}\label{hterm}
c_{k} = \left\{\frac{-(xq_k+yp_k)}{p^{n+1}}\right\}
\end{align}
where $p_k$ and $q_k$ are the numerator and denominator, respectively, of the $k$th convergent $[[b_0; b_1, \ldots, b_k]]$ for $(\delta+\sqrt{D})/2$ with $p_{-1} = 1$, $q_{-1} = 0$ by convention. Then Yamamoto's Theorem \ref{Yamamoto} implies
\begin{align}\label{prep}
\zeta(0,[(\alpha)]) = \sum_{k=1}^{r_nm} \left(\frac{b_k}{2}B_1(c_{k-1})^2 - B_1(c_{k-1})B_1(c_{k-2}) \right)+ C
\end{align}
where $r_n = \log(\varepsilon_\m)/\log(\varepsilon)$ is the order of $\varepsilon$ modulo $p^{n+1}$ and
\begin{align*}
C =  -\frac{r_n}{24}\sum_{k=1}^{m} b_k
\end{align*}
does not depend on the class $[(\alpha)]$. 

Let $g\in \Z$ be a primitive root modulo all powers of $p$, so, in particular, $\overline{g}$ has order $p^n(p-1)$ in $(\OO_K/(p^{n+1}))^{\times}$. Let $r_0$ denote the order of $\varepsilon$ modulo $p$. Then $r_0|p\pm1$ where the sign is $+$ or $-$ when $p$ is inert or split, respectively, in $K$. We have an isomorphism of abelian groups
\begin{align*}
(\OO_K/(p^{n+1}))^{\times} \cong \left\{\begin{array}{ll} \Z/(p^n(p^2 - 1)) \oplus \Z/(p^n) & \mbox{$p$ inert in $K$} \\
\Z/(p^n(p - 1)) \oplus \Z/(p^n(p-1)) & \mbox{$p$ split in $K$.} 
  \end{array}\right.
\end{align*}
Note that if $\varepsilon^a \equiv b \pmod{p^{n+1}}$ for some integers $a,b$, then we get a congruence of norms $b^2 = N(b) \equiv N(\varepsilon)^a = 1 \pmod{p^{n+1}}$, so $b$ is either $1$ or $-1$ modulo $p^{n+1}$. Hence the subgroup $\<\overline{\varepsilon}\>\cap\< \overline{g}\>\subseteq (\OO_K/(p^{n+1}))^{\times}$ has order $2$ or $1$ depending on whether there does or does not, respectively, exist an integer $c$ such that $-1\equiv \varepsilon^c \pmod{p^{n+1}}$; the existence of such a $c$ is equivalent to the statement that $2|r_0$. We will often make the following simplifying assumption.
\begin{assume}\label{bass}
Suppose that $p^2\nmid \varepsilon^{r_0}-1$. \footnote{Of course, the statement in the assumption does not always hold; e.g., if $p=7$ and $D = 23\cdot4$, then $\varepsilon = 24 + 5\sqrt{23}$ has order $3$ modulo 7, and, in fact, $7^2|\varepsilon^3-1$.}
\end{assume}
Making Assumption \ref{bass} implies $r_n = p^nr_0$ is the order of $\varepsilon$ modulo $p^{n+1}$, and thus the quotient group $(\OO_K/(p^{n+1}))^{\times}/\<\overline{\varepsilon}, \overline{g}\>$ is cyclic of $p$-prime order $v = 2^u(p \pm 1)/r_0$ where $u=1$ if $2|r_0$ and $u=0$ otherwise. Choose a totally positive\footnote{In fact, we may choose \emph{any} $\eta$ whose congruence class generates this quotient since the quantities $h_{j,k}$ can be modified modulo $p^{n+1}$.} $\eta \in \OO_K$ whose congruence class $\overline{\eta}$ generates this quotient. Then we have a surjection
\begin{align*}
\{ (i,j) : 1\leq i\leq p^n(p-1), 0\leq j \leq v-1 \} \rightarrow C_\m^+
\end{align*}
given by $(i,j) \mapsto [(g^i\eta^j)]$ which is either one-to-one or two-to-one depending on whether $u=0$ or $u=1$, respectively. For each $j$ write
\begin{align*}
\eta^j = x_j+y_j \frac{\delta+\sqrt{D}}{2}
\end{align*}
where $x_j, y_j \in \Z$, and then define
\begin{align*}
h_{j,k} = -(x_jq_k+y_j p_k)
\end{align*}
for each $k$. Note that $\chi_n( [(g^i\eta^j)]) = \psi_n(g^{2i})$ since $\eta$ has prime-to-$p$ order modulo $p^{n+1}$, so Eq.s \ref{C+} and \ref{prep} imply that
\begin{align*}
\lambda_p(D_1)+\lambda_p(D_2) = &\ord_{\p_n} \sum_{i=1}^{p^n(p-1)}\sum_{j=0}^{v-1}\sum_{k=1}^{r_nm} \psi_n^2(g^{i}) \left(\frac{b_{k}}{2}B_1\left( \left\{\frac{h_{j,k-1} g^i}{p^{n+1}}\right\} \right)^2\right. \\
&\left.- B_1\left( \left\{\frac{h_{j,k-1}g^i}{p^{n+1}}\right\} \right)B_1\left( \left\{\frac{h_{j,k-2}g^i}{p^{n+1}}\right\} \right)^{\textcolor{white}{2}} \!\!\!\! \right).
\end{align*}
To ease notation we define a \emph{twisted}, homogeneous Dedekind sum for an arbitrary Dirichlet character $\psi$ of modulus $\f$:
\begin{align*}
D_{\psi}(a,b) = \sum_{t=1}^{\f} \psi(t)\left( \left\{\frac{at}{\f}\right\} - \frac{1}{2}\right)\left( \left\{\frac{bt}{\f}\right\}-\frac{1}{2}\right).
\end{align*}
Since the character $\psi_n^2$ also generates the $n$th level in the cyclotomic $\Z_p$-extension of $K$ and since $t = g^i$ runs through the units modulo $p^{n+1}$ as $i$ runs though $\{1, 2, \ldots, p^n(p-1)\}$, we have proved the following.
\begin{thm}\label{mainthm}
Make Assumption \ref{ass}. Suppose $p\nmid D$ is an odd prime satisfying Assumption \ref{bass}. Then for sufficiently large $n$
\begin{align}\label{mainformula}
\lambda_p(D_1)+\lambda_p(D_2) = \ord_{\p_n} 
\sum_{j,k}\left(\frac{b_k}{2}D_{\psi_n}(h_{j,k-1}, h_{j,k-1}) - D_{\psi_n}(h_{j,k-1},h_{j,k-2}) \right)
\end{align}
where $(\delta+\sqrt{D})/2 = [[b_0; \overline{b_1, b_2, \ldots, b_m}]]$ as in Lemma \ref{contfrac} and the rest of the notation is as above.
\end{thm}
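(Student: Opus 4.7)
The plan is to assemble the conclusion from the calculations already displayed in the run-up to the theorem. First, apply Equation \ref{ideal} to each of $\Delta = D_1, D_2$ separately; for $n$ large enough that $\ord_{\p_n} L(0, \epsilon_i \psi_n) < \varphi(p^n)$ for both $i$, this gives
\[
\lambda_p(D_1) + \lambda_p(D_2) = \ord_{\p_n} L(0, \chi_n) = \ord_{\p_n} \sum_{\c \in C_\m} \chi_n(\c)\,\zeta(0,\c),
\]
using the factorization $L(s, \chi_n) = L(s, \epsilon_1 \psi_n) L(s, \epsilon_2 \psi_n)$ supplied by Theorem \ref{Kronecker}. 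Under Assumption \ref{ass}, the decomposition $C_\m = C_\m^+ \times \langle \Theta \rangle$, combined with $\chi_n(\Theta) = -1$ and $\zeta(0, \c^{\ast}) = -\zeta(0, \c)$ from Theorem \ref{Yamamoto}, collapses this to twice the sum over $C_\m^+$; since $p$ is odd, the factor of $2$ is invisible to $\ord_{\p_n}$, yielding Equation \ref{C+}.

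Next, I would parametrize $C_\m^+$ using the class number 1 hypothesis and the exact sequence \ref{exactseq}. Assumption \ref{bass} guarantees $\varepsilon$ has order exactly $p^n r_0$ modulo $p^{n+1}$, so the totally positive representatives $g^i \eta^j$, with $1 \leq i \leq p^n(p-1)$ and $0 \leq j \leq v - 1$, surject onto $C_\m^+$; the surjection is $2$-to-$1$ precisely when $u = 1$, but that extra factor of $2$ is again absorbed by $\ord_{\p_n}$. For each representative $\alpha = g^i \eta^j$, writing $\eta^j = x_j + y_j (\delta + \sqrt{D})/2$ and checking the recursion in Definition \ref{c_k} inductively against the convergent recursion for the continued fraction of $(\delta + \sqrt{D})/2$ produces $c_k = \{ h_{j,k} g^i / p^{n+1} \}$, which is Equation \ref{hterm}; feeding this into Yamamoto's Theorem \ref{Yamamoto} gives Equation \ref{prep}.

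Substituting Equation \ref{prep} into the reduced sum \ref{C+}, the key observation is that the additive constant $C = -(r_n/24)\sum_{k} b_k$ contributes $C \cdot \sum_{\c \in C_\m^+} \chi_n(\c)$, which vanishes because $\chi_n$ is a nontrivial character on $C_\m^+$: indeed $\chi_n([(g^i \eta^j)]) = \psi_n(g^{2i})$, and $\psi_n(g^2)$ has order $p^n > 1$ because $p$ is odd. Reshuffling the triple sum so that the $i$-sum sits innermost and observing that $t = g^i$ traverses $(\Z/(p^{n+1}))^\times$, the inner sums collapse precisely into the twisted Dedekind sums $D_{\psi_n^2}(h_{j, k-1}, h_{j, k-1})$ and $D_{\psi_n^2}(h_{j, k-1}, h_{j, k-2})$. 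Since $p$ is odd, $\psi_n^2$ is itself a generator of the $n$th cyclotomic level, so relabeling $\psi_n^2$ as $\psi_n$ (which preserves $\ord_{\p_n}$ by Galois invariance) yields Equation \ref{mainformula}.

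The main obstacle I anticipate is purely bookkeeping: verifying Equation \ref{hterm} inductively, tracking the disappearance of the constant $C$ under the character sum, and absorbing two independent factors of $2$ (one from the $\Theta$-collapse, one from the possible $2$-to-$1$ parametrization) into the fact that $\ord_{\p_n}(2) = 0$. Each step individually is routine, but one has to confirm these extraneous factors vanish cleanly before the outer sum over $\c$ can be legitimately reorganized into a double sum over $(j, k)$ of twisted Dedekind sums.
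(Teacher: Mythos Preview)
Your proposal is correct and follows essentially the same route as the paper, which assembles the theorem from Equations \ref{firstpart}, \ref{C+}, \ref{hterm}, and \ref{prep} together with the parametrization of $C_\m^+$ by $(i,j)$ and the observation that $\psi_n^2$ is again a generator of the $n$th level. You are in fact slightly more explicit than the paper on two points it leaves tacit: the vanishing of the constant $C$ under the character sum, and the absorption of the stray factors of $2$ into $\ord_{\p_n}$; both of your justifications are sound.
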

\begin{rem}
We can compute the Dedekind sums $D_{\psi_n}(a,a)$ for any integer $a$ as follows. Write $a = p^ma'$ where $p\nmid a'\in \Z$ and $0\leq m \in\Z$. If $m\geq n+1$, then $D_{\psi_n}(a,a) = 0$ since $\{at/p^{n+1}\}=0$ for all $t\in\Z$. Thus we may assume $0\leq m\leq n$. Choose $b\in\Z$ with $a'b\equiv 1\pmod{p^{n+1-m}}$. Then
\begin{align*}
&D_{\psi_n}(a,a) = \sum_{t=1}^{p^{n+1}} \psi_n(bt)\left( \left\{\frac{a'bt}{p^{n+1-m}}\right\} - \frac{1}{2}\right)^2 \\
&= \psi_n(b) \mathop{\sum_{r=1}^{p^{n+1-m} -1}}_{p\nmid r}\left(\frac{r}{p^{n+1-m}}-\frac{1}{2}\right)^2 \, \sum_{q=0}^{p^{m}-1} \psi_n(qp^{n+1-m}+r).
\end{align*}
If $m=0$ (i.e., $p\nmid a$), then $a=a'$ and $ab\equiv 1 \pmod{p^{n+1}}$, so
\begin{align}\label{same}
D_{\psi_n}(a,a)=\overline{\psi}_n(a)\sum_{t=1}^{p^{n+1}} \psi_n(t) \frac{t^2}{p^{2n+2}}
\end{align}
since
\begin{align*}
&\sum_{t=1}^{p^{n+1}} \psi_n(t) \left\{\frac{t}{p^{n+1}}\right\} = \sum_{t=1}^{p^{n+1}-1} \psi_n(p^{n+1}-t)\left\{\frac{p^{n+1}-t}{p^{n+1}}\right\} \\
&= \sum_{t=1}^{p^{n+1}-1} \psi_n(t)\left(1-\left\{\frac{t}{p^{n+1}}\right\}\right) = -\sum_{t=1}^{p^{n+1}} \psi_n(t) \left\{\frac{t}{p^{n+1}}\right\} = 0.
\end{align*}
On the other hand, if $m>0$ (i.e., $p|a$), then for $g\in \Z$ a primitive root modulo all powers of $p$ as above, we have
\begin{align*}
\sum_{q=0}^{p^{m}-1} \psi_n(qp^{n+1-m}+r) = \sum_{j=0}^{p^m-1} \zeta_{p^n}^{i+jp^{n-m}(p-1)} = \zeta_{p^n}^i \sum_{j=0}^{p^m-1} \zeta_{p^m}^{j(p-1)} = 0
\end{align*}
where $g^i=r$ and $\psi_n(g) = \zeta_{p^n}$ is a primitive $p^n$th root of unity. Thus $D_{\psi_n}(a,a)=0$, so Equation \ref{same} holds in this case as well since $\psi_n(a)=0$ when $p|a$. We summarize the results of this remark in the following proposition.
\end{rem}
\begin{prop}
We have for all $a\in \Z$
\begin{align}
D_{\psi_n}(a,a) = \frac{\overline{\psi}_n(a)}{p^{2n+2}}\sum_{t=1}^{p^{n+1}-1}\psi_n(t) \cdot t^2.
\end{align}
\end{prop}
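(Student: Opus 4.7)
The plan is to split into cases according to $v := \ord_p(a)$ and reduce each case to standard character-sum identities.

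In the case $v = 0$, i.e.\ $p \nmid a$, I would pick $b \in \Z$ with $ab \equiv 1 \pmod{p^{n+1}}$ and perform the substitution $t \mapsto bt$ in the defining sum. Because multiplication by $b$ permutes units modulo $p^{n+1}$ and $\psi_n(b) = \overline{\psi}_n(a)$, this converts $D_{\psi_n}(a,a)$ into $\overline{\psi}_n(a)$ times the sum $\sum_{t=1}^{p^{n+1}} \psi_n(t)(\{t/p^{n+1}\} - 1/2)^2$. Expanding the square leaves three pieces: the constant $\tfrac{1}{4}$ contributes nothing by character orthogonality ($\sum_t \psi_n(t) = 0$); the linear term $-\{t/p^{n+1}\}$ contributes nothing by pairing $t \leftrightarrow p^{n+1}-t$, using $\psi_n(-1) = 1$ (since $\psi_n$ has odd order $p^n$) together with $\{(p^{n+1}-t)/p^{n+1}\} = 1 - \{t/p^{n+1}\}$ for $p \nmid t$; and the quadratic term gives exactly $p^{-(2n+2)}\sum_{t=1}^{p^{n+1}-1} \psi_n(t) t^2$, since $\{t/p^{n+1}\}^2 = t^2/p^{2n+2}$ for $1 \leq t \leq p^{n+1}-1$ and $\psi_n$ vanishes on multiples of $p$.

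In the case $v \geq 1$, the right-hand side is zero because $\overline{\psi}_n(a) = 0$, so it suffices to show the left-hand side vanishes. If $v \geq n+1$ this is trivial since $\{at/p^{n+1}\} = 0$ for every $t$. Otherwise write $a = p^v a'$ with $p \nmid a'$, choose $b$ with $a'b \equiv 1 \pmod{p^{n+1-v}}$, substitute, and then regroup the sum according to the residue $r$ of $t$ modulo $p^{n+1-v}$. The external factor involving $(\{r/p^{n+1-v}\} - 1/2)^2$ is independent of the coset representative, and the internal sum becomes $\sum_{q=0}^{p^v-1}\psi_n(qp^{n+1-v} + r)$. Using the standard identification $\psi_n(g) = \zeta_{p^n}$ for $g$ a primitive root modulo $p^{n+1}$, this inner sum is $\zeta_{p^n}^i$ times $\sum_{j=0}^{p^v-1}\zeta_{p^v}^{j(p-1)}$, which is zero because $p^v \nmid (p-1)$.

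There is no real obstacle here: both cases reduce, by elementary manipulations, to the vanishing of a character sum. The only things one must be careful about are that $\psi_n(-1) = 1$ (which needs $p$ odd), the edge case $v \geq n+1$, and the bookkeeping of dividing up the outer sum by cosets in Case 2 — all of which are already spelled out in the preceding remark, so the proposition follows directly from assembling those computations.
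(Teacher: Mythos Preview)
Your proposal is correct and follows essentially the same route as the paper: the same case split on $\ord_p(a)$, the same substitution $t\mapsto bt$, the same $t\leftrightarrow p^{n+1}-t$ pairing to kill the linear term, and the same primitive-root computation showing the inner character sum over $q$ vanishes when $p\mid a$. Indeed, as you note yourself, this is exactly the content of the remark immediately preceding the proposition, which the proposition merely summarizes.
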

In light of the above, one might hope to also evaluate the sums
\begin{align*}
D_{\psi_n}(h_{j,0},h_{j,-1})+D_{\psi_n}(h_{j,1},h_{j,0}) + \cdots + D_{\psi_n}(h_{j,r_nm-1},h_{j,r_nm-2})
\end{align*}
using a reciprocity law for Dedekind sums with characters, but the author is presently unaware of how this can be done. Nonetheless, Theorem \ref{mainthm} provide us with a means of computing lambda invariants.
\begin{exam}
Take $p=3$ and let $\ell \equiv 11 \pmod{12}$ be a prime such that the number field $K=\Q(\sqrt{\ell})$ of discriminant $D = 4\ell$ has class number one.
Then the totally positive fundamental unit $\varepsilon > 1$ in $K$ has order dividing $4$ in $(\OO_K/(3))^{\times}$.
Suppose this order is exactly $4$ and that $3^2 \nmid \varepsilon^4 - 1$.
Then for any positive integer $n$, we have that $2$ is a primitive root modulo $3^{n+1}$ and
that $(\OO_K/(3^{n+1}))^{\times}/\<\overline{\varepsilon}, \overline{2}\>$ is cyclic of order $2$ since the order $r_n$ of
$\varepsilon$ modulo $3^{n+1}$ will be $r_n = 4\cdot 3^n$.
We want a generator $\eta \in \OO_K$ of this quotient, and it clearly suffices to choose $\overline{\eta}$ to be an element of order $8$ in
\[(\OO_K/(3^{n+1}))^{\times} \cong \frac{\Z}{(3^n)} \oplus \frac{\Z}{(3^n)} \oplus \frac{\Z}{(8)}.\]
Alternatively, we may regard $\eta$ as an eighth root of unity in $\Q_3(\sqrt{\ell})$, and in that case, a fixed choice of $\eta$ will suffice for all $n$. We can construct such an $\eta$ by using Hensel lifting on $1+\sqrt{\ell}$ since
\begin{align*}
(1+\sqrt{\ell})^4 = (1+\ell + 2\sqrt{\ell})^2 \equiv (-\sqrt{\ell})^2 = \ell \equiv -1 \pmod{3}.
\end{align*}
Let us consider now a concrete case. Take $\ell=239 \equiv 11 \pmod{12}$. Then
\[\sqrt{\ell} = \sqrt{239} = [[16, \overline{2, 7, 4, 2, 2, 2, 17, 2, 2, 2, 4, 7, 2, 32}]],\]
so
\begin{align*}
\varepsilon = p_{14-1}+q_{14-1}\sqrt{239} = 6195120 + 400729\sqrt{239}.
\end{align*}
It is easy to check that $\varepsilon$ has order $4$ modulo $3$ since
\begin{align*}
\varepsilon \equiv 0+1\sqrt{239} \pmod{3}
\end{align*}
and $239 \equiv -1 \pmod{3}$. We also easily verify that $9\nmid \varepsilon^4-1$ since
\begin{align*}
\varepsilon^4 \equiv (6 + 4\sqrt{239})^4 \equiv (8+3\sqrt{239})^2 \equiv 1+3\sqrt{239} \pmod{9}.
\end{align*}
For $n=1$, we compute the right hand side of Eq. \ref{mainformula} and get
\begin{align*}
\ord_{\p_1} (-12\zeta_3 - 24) = \ord_{\p_1} (3 (\zeta_3 -1 + 3)) = 2+1 = 3>2=\varphi(3)
\end{align*}
where $\p_1 = (\zeta_3-1)$. Of course, $\lambda_3(-4)=0$ since 3 remains inert in $\Q(i)$, so we must have $\lambda_3(-\ell)\geq 2$ by the comments following Eq. \ref{ideal}. Likewise, for $n=2$ we get
\begin{align*}
\ord_{\p_2}(72\zeta_{3^2}^5 + 12\zeta_{3^2}^4 + 72\zeta_{3^2}^3 + 84\zeta_{3^2}^2 - 12\zeta_{3^2} + 12) = 6 = \varphi(3^2)
\end{align*}
where $\p_2 = (\zeta_{3^2}-1)$, so $\lambda_3(-\ell)\geq 6$. For $n=3$, we find
\begin{align*}
 \ord_{\p_3}(&-60\zeta_{3^3}^{17} + 140\zeta_{3^3}^{16} + 212\zeta_{3^3}^{15} + 112\zeta_{3^3}^{14} - 40\zeta_{3^3}^{13} + 8\zeta_{3^3}^{11} \\
&- 36\zeta_{3^3}^{10} + 40\zeta_{3^3}^9 + 184\zeta_{3^3}^7 + 68\zeta_{3^3}^6 - 16\zeta_{3^3}^5 - 128\zeta_{3^3}^4 \\
 &- 92\zeta_{3^3}^3 + 136\zeta_{3^3}^2 + 96\zeta_{3^3} + 36) \\
&= 6 < 18 =\varphi(3^3)
\end{align*}
where $\p_3 = (\zeta_{3^3}-1)$. Thus $\lambda_3(-239) = 6$. This and other similar computations (with the help of gp/pari) agree with known results as found in \cite{Dumm} for example.
\end{exam}
Under additional assumptions, we can compute lambda invariants without having to compute an $\eta \in \OO_K$ as above. In particular, the invariants can be computed using only a choice of primitive root $g$ and the mod $p^{n+1}$ data from the continued fraction expansion of $\sqrt{\ell}$.
\begin{corollary}
Suppose $\ell \equiv 3 \pmod{4}$ is a prime such that $\Q(\sqrt{\ell})$ has class number $1$. For each $k$ let $p_k$ and $q_k$ denote the numerator and denominator, respectively, of the $k$th convergent in the `minus' continued fraction expansion $\sqrt{\ell} =[[b_0; \overline{b_1, \ldots, b_m}]]$ where $m=$ minimal period and $p_{-1} = 1$, $q_{-1} = 0$ by convention. Let $p\neq \ell$ be an odd prime such that $p$ is inert in $\Q(\sqrt{-\ell})$ and that the fundamental unit $\varepsilon = p_{m-1}+q_{m-1}\sqrt{\ell}$ has order $r$ modulo $p$ satisfying the following technical assumptions:
\begin{enumerate}
\item $p^2 \nmid \varepsilon^{r}-1$
\item $r = p+1$ if $p \equiv 1\pmod{4}$
\item $r = (p-1)/2$ if $p \equiv 3\pmod{4}$.
\end{enumerate}
Choose $g\in \Z$ to be a primitive root modulo all powers of $p$, so there are integers $e_1$, $e_2$ with $\ell \equiv g^{e_1} \pmod{p^{n+1}}$ and $2e_2\equiv e_1 \pmod{p^n}$. For all $i,k$ take $p_k^{(i)}$, $q_k^{(i)}$ to be the least nonnegative residues of $g^i p_k$, $g^i q_k$ modulo $p^{n+1}$, and let $s_k^{(i)}$, $t_k^{(i)}$ denote the unique integers such that $b_kp_{k-1}^{(i)}-p_{k-2}^{(i)} = p_k^{(i)}+s_k^{(i)}p^{n+1}$ and $b_kq_{k-1}^{(i)}-q_{k-2}^{(i)} = q_k^{(i)}+t_k^{(i)}p^{n+1}$. Then for sufficiently large $n$,
\begin{align*} \lambda_p(-\ell)+\lambda_p(-4) = \ord_{\p_n}\sum_{i=1}^{p^n(p-1)}  \zeta_{p^n}^i \sum_{k=1}^{p^nrm} \frac{ t_{k}^{(i)}q_{k-1}^{(i)} + \zeta_{p^n}^{e_2} s_{k}^{(i)} p_{k-1}^{(i)}}{2p^{n+1}}
\end{align*}
where $\zeta_{p^n}$ is a primitive $p^n$th root of unity.
\end{corollary}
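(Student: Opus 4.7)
The plan is to specialize Theorem~\ref{mainthm} to $D=4\ell$, $D_1=-\ell$, $D_2=-4$, $\delta=0$, so that $(\delta+\sqrt D)/2=\sqrt\ell$ and the fundamental unit is $\varepsilon=p_{m-1}+q_{m-1}\sqrt\ell$. The hypotheses of the corollary imply Assumption~\ref{ass} and give Assumption~\ref{bass} with $r_0=r$; a direct computation of $(\OO_K/(p^{n+1}))^{\times}$ in the inert case $p\equiv 1\pmod 4$ and the split case $p\equiv 3\pmod 4$ using the extremal values $r=p+1$ and $r=(p-1)/2$ shows $v=2^u(p\pm 1)/r_0=2$ in both cases. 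I would take $\eta=\sqrt\ell$, permitted by the footnote of Theorem~\ref{mainthm} even though $\sqrt\ell$ is not totally positive: $\overline\eta$ generates $(\OO_K/(p^{n+1}))^{\times}/\<\overline\varepsilon,\overline g\>$ because $\eta^2=\ell\in\<\overline g\>$, whereas $N(\sqrt\ell)=-\ell$ is a non-square modulo $p$ (by $p$ inert in $\Q(\sqrt{-\ell})$) while every element of $\<\overline\varepsilon,\overline g\>$ has a square norm mod $p$ (since $N(\varepsilon)=1$ and $N(g)=g^2$). With $\eta^0=1$ and $\eta^1=\sqrt\ell$, the formula $h_{j,k}=-(x_jq_k+y_jp_k)$ specialises to $h_{0,k}=-q_k$ and $h_{1,k}=-p_k$.

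Writing $\xi=\psi_n(g)$ with $\zeta_{p^n}=\xi^2$, the formula $\chi_n=\chi\cdot(\psi_n\circ N)$, the triviality of $\chi$ on totally positive principal ideals, $\psi_n(-1)=1$ (as $\psi_n$ has odd order), and the congruence $2e_2\equiv e_1\pmod{p^n}$ combine to give $\chi_n([(g^i\eta^j)])=\xi^{2i+e_1 j}=\zeta_{p^n}^{i+e_2 j}$, which produces the weight $\zeta_{p^n}^{e_2}$ on the $j=1$ contribution. Using $B_1(1-x)=-B_1(x)$ to rewrite Yamamoto's integrand (Eq.~\ref{prep}) in $q_k^{(i)}/p^{n+1}$, defining $t_k^{(i)}$ via $b_kq_{k-1}^{(i)}-q_{k-2}^{(i)}=q_k^{(i)}+t_k^{(i)}p^{n+1}$, and factoring yields
\begin{align*}
\tfrac{1}{2}B_1(q_{k-1}^{(i)}/p^{n+1})\bigl[B_1(q_k^{(i)}/p^{n+1})-B_1(q_{k-2}^{(i)}/p^{n+1})+t_k^{(i)}+1-b_k/2\bigr].
\end{align*}
Over the full period $N=p^nrm$, the first difference telescopes to zero by the periodicity $q_{k+N}^{(i)}\equiv q_k^{(i)}\pmod{p^{n+1}}$ (a consequence of $\varepsilon^{p^n r}\equiv 1\pmod{p^{n+1}}$), and summing the continued-fraction recursion gives the integer identity $\sum_k q_{k-1}^{(i)}(b_k-2)=p^{n+1}\sum_k t_k^{(i)}$ which absorbs the $(1-b_k/2)$ piece. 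The $j=0$ contribution thus reduces to $\sum_k\tfrac{t_k^{(i)}q_{k-1}^{(i)}}{2p^{n+1}}$ plus a residual $-\tfrac{1}{2}\sum_k t_k^{(i)}$, with the analogous reduction for $j=1$ in terms of $s_k^{(i)}$ and $p_k^{(i)}$.

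Combining the two $j$'s with their character weights and summing over $i$ produces the corollary's formula plus the residual $R=-\tfrac{1}{2}\sum_i\zeta_{p^n}^i\bigl[\sum_k t_k^{(i)}+\zeta_{p^n}^{e_2}\sum_k s_k^{(i)}\bigr]$. The main obstacle is showing $\ord_{\p_n}(R)>\lambda_p(-\ell)+\lambda_p(-4)$ for sufficiently large $n$, so that $R$ does not affect the valuation. For this I would use the Gauss-type evaluation $\sum_{i=1}^{p^n(p-1)}\zeta_{p^n}^i(g^ia\bmod p^{n+1})=-\psi_n^2(a)^{-1}p^{n+1}L(0,\psi_n^2)$, from which (with care at the sparse set $k\equiv -1\pmod{p^nrm}$ where $p\mid q_k$) one gets $\sum_i\zeta_{p^n}^i t_k^{(i)}=L(0,\psi_n^2)[\psi_n^2(q_k)^{-1}-b_k\psi_n^2(q_{k-1})^{-1}+\psi_n^2(q_{k-2})^{-1}]$. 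Since $\lambda_p(\Q)=0$ for odd $p$, $L(0,\psi_n^2)$ is a $\p_n$-adic unit, so bounding $\ord_{\p_n}(R)$ reduces to analysing the sum involving $\psi_n^2(q_k)^{-1}$ and $\zeta_{p^n}^{e_2}\psi_n^2(p_k)^{-1}$; the identification $\zeta_{p^n}^{e_2}=\psi_n(\ell)$ together with the norm relation $p_{m-1}^2-\ell q_{m-1}^2=1$ propagated through the recursion should force the combined residual to be divisible by a growing power of $p$ under the corollary's assumptions, yielding the stated "sufficiently large $n$" clause.
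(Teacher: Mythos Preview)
Your setup and choice of $\eta=\sqrt\ell$ match the paper's proof, and your algebraic factoring is a valid alternative to the paper's more direct route (the paper simply writes $b_kc_{k-1}^2-2c_{k-1}c_{k-2}=\bigl((b_kq_{k-1}^{(i)}-q_{k-2}^{(i)})q_{k-1}^{(i)}-q_{k-2}^{(i)}q_{k-1}^{(i)}\bigr)/p^{2(n+1)}$, substitutes $b_kq_{k-1}^{(i)}-q_{k-2}^{(i)}=q_k^{(i)}+t_k^{(i)}p^{n+1}$, and telescopes $\sum_k\bigl(q_k^{(i)}q_{k-1}^{(i)}-q_{k-1}^{(i)}q_{k-2}^{(i)}\bigr)=0$). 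The paper in fact passes from Eq.~(\ref{prep}) to the expression in $\{g^iq_{k-1}/p^{n+1}\}$ without comment on the linear pieces of $B_1(x)=x-\tfrac12$, so your instinct that a residual $R$ must be controlled is sound.

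But your treatment of $R$ contains a genuine error. You claim that $L(0,\psi_n^2)$ is a $\p_n$-adic unit; in fact $\psi_n$ has odd order $p^n$, so $\psi_n(-1)=1$, $\psi_n^2$ is \emph{even}, and $L(0,\psi_n^2)=0$. The correct and much simpler observation is that $R$ vanishes identically. Using your own identity $\sum_k t_k^{(i)}=\tfrac{1}{p^{n+1}}\sum_k(b_k-2)q_{k-1}^{(i)}$, the $j=0$ part of $R$ is a linear combination over $k$ of
\[
\sum_{i=1}^{p^n(p-1)}\zeta_{p^n}^i\,q_{k-1}^{(i)}=\sum_{t\in(\Z/p^{n+1})^\times}\psi_n^2(t)\,(tq_{k-1}\bmod p^{n+1}),
\]
and this is zero for \emph{every} $k$: if $p\nmid q_{k-1}$, substitute $t\mapsto tq_{k-1}^{-1}$ and use $\sum_t\psi_n^2(t)\,t=0$ (the even-character identity $t\mapsto p^{n+1}-t$); if $p^{n+1}\mid q_{k-1}$ it is trivially $0$; and if $p^a\| q_{k-1}$ with $1\le a\le n$, it vanishes because $\psi_n^2$ has conductor $p^{n+1}$ and is therefore nontrivial on the kernel of $(\Z/p^{n+1})^\times\to(\Z/p^{n+1-a})^\times$, so the partial character sums over each fibre are zero. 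The same argument with $p_{k-1}$ disposes of $j=1$. Hence the ``main obstacle'' disappears with no growth estimate needed, and your proposed endgame via the norm relation $p_{m-1}^2-\ell q_{m-1}^2=1$ is both unnecessary and built on the false premise that $L(0,\psi_n^2)\ne 0$.
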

\begin{proof}
Obviously, Assumptions \ref{ass}, \ref{bass} hold for $K = \Q(\sqrt{4\ell})$ and $r_0=r$, so we may apply all of the ideas which culminated in Theorem \ref{mainthm}. In particular, we will exhibit a set of representatives for $(\OO_K/(p^{n+1}))^{\times}$ modulo $\varepsilon$. The assumption that $p$ is inert in $\Q(\sqrt{-\ell})$ is equivalent to the statement that $-\ell$ is not a square modulo $p$. Thus for $t = 1$, $2$, $\ldots$, $p^{n+1}-1$ with $p\nmid t$, we know that $t\sqrt{\ell}$ is a unit modulo $p^{n+1}$ which is never congruent to a power of $\varepsilon$ modulo $p^{n+1}$ since otherwise $-t^2\ell = N(t\sqrt{\ell}) \equiv 1 \pmod{p^{n+1}}$, a contradiction. Now we use our technical assumptions on $\varepsilon$. In the case that $p \equiv 1 \pmod{4}$, we have assumed that $r = p + 1$, so $p$ is inert in $K=\Q(\sqrt{\ell})$ and there is a unique element of order two in $(\OO_K/(p^{n+1}))^{\times} \cong \Z/(p^n(p^2-1))\oplus \Z/(p^n)$ which corresponds to $-1$ and is a power of $\varepsilon$ modulo $p^{n+1}$. Consider the map
\begin{align}\label{1to1}
\left\{ \overline{t}, \overline{t\sqrt{\ell}} : t=1, 2, \ldots, p^{n+1}-1, p \nmid t \right\} \rightarrow C_\m^+
\end{align}
given by the restriction of the map $(\OO_K/(p^{n+1}))^{\times} \rightarrow C_\m^+$ in \ref{exactseq}. This map is two-to-one in the case just described. Similarly, in the case that $p \equiv 3 \pmod{4}$, we have assumed that $r = (p - 1)/2$ is odd, so $p$ is split in $K=\Q(\sqrt{\ell})$ and now the the map in \ref{1to1} is one-to-one since $-1$ is not congruent to a power of $\varepsilon$ modulo $p^{n+1}$ in this case. By Eq.s \ref{C+}, \ref{hterm}, and \ref{prep}, we get for $n$ sufficiently large that $\lambda_p(-\ell)+\lambda_p(-4)$ is
\begin{align*}
&\ord_{\p_n} \mathop{\sum_{t=1}^{p^{n+1}}}_{p\nmid t} \left(\psi_n(t^2)\zeta(0,[(t)]) + \psi_n(-t^2\ell)\zeta(0,[(t\sqrt{\ell})])\right) \\
=&\ord_{\p_n} \sum_{i=1}^{p^n(p-1)} \psi_n(g^2)^i\left(\zeta(0,[(g^i)]) + \psi_n(g)^{e_1}\zeta(0,[(g^i\sqrt{\ell})])\right) \\
=& \ord_{\p_n} \sum_{i=1}^{p^n(p-1)} \frac{\zeta_{p^n}^i}{2} \sum_{k=1}^{p^nrm} \left( b_k\left\{\frac{g^iq_{k-1}}{p^{n+1}}\right\}^2 - 2\left\{\frac{g^iq_{k-1}}{p^{n+1}}\right\}\left\{\frac{g^iq_{k-2}}{p^{n+1}}\right\} \right.\\
&\hspace{0.94 in} +\left. \zeta_{p^n}^{e_2}\left(b_k\left\{\frac{g^ip_{k-1}}{p^{n+1}}\right\}^2 - 2\left\{\frac{g^ip_{k-1}}{p^{n+1}}\right\}\left\{\frac{g^ip_{k-2}}{p^{n+1}}\right\} \right)  \right) \\
&=  \ord_{\p_n} \sum_{i=1}^{p^n(p-1)} \frac{\zeta_{p^n}^i}{2p^{2(n+1)}} \sum_{k=1}^{p^nrm} \left((b_kq_{k-1}^{(i)}-q_{k-2}^{(i)} )q_{k-1}^{(i)} - q_{k-2}^{(i)}q_{k-1}^{(i)}\right.\\
&\hspace{1.4 in} + \zeta_{p^n}^{e_2} \left. \left( (b_kp_{k-1}^{(i)}-p_{k-2}^{(i)} )p_{k-1}^{(i)} - p_{k-2}^{(i)}p_{k-1}^{(i)} \right)\right) \\
&=\ord_{\p_n} \sum_{i=1}^{p^n(p-1)} \frac{\zeta_{p^n}^i}{2p^{2(n+1)}} \sum_{k=1}^{p^nrm} \left(p^{n+1}t_{k}^{(i)}q_{k-1}^{(i)} + q_k^{(i)}q_{k-1}^{(i)} - q_{k-1}^{(i)}q_{k-2}^{(i)}\right.  \\
& \hspace{1.6 in} \left. + \zeta_{p^n}^{e_2}\left(p^{n+1}s_{k}^{(i)}p_{k-1}^{(i)} + p_k^{(i)}p_{k-1}^{(i)} - p_{k-1}^{(i)}p_{k-2}^{(i)}\right)  \right)
\end{align*}
where $\p_n = (1-\zeta_{p^n})$ with $\zeta_{p^n}=\psi_n^2(g)$ and $\psi_n(g)^{e_1} = \psi_n(g)^{2e_2} = \zeta_{p^n}^{e_2}$.

We have the formula $\varepsilon^j(p_{k} + q_{k}\sqrt{\ell}) = p_{jm+k}+q_{jm+k}\sqrt{\ell}$ for all integers $j \geq 0$, $k \geq -1$, so the sequences $p_k$, $q_k$ are periodic modulo $p^{n+1}$ with period $p^nrm$. Thus for all $i\geq 0$, the sequences $p_k^{(i)}$, $q_k^{(i)}$ are periodic with period $p^nrm$, so
\begin{align*}
\sum_{k=1}^{p^nrm} (q_k^{(i)}q_{k-1}^{(i)} - q_{k-1}^{(i)}q_{k-2}^{(i)}) = 0 = \sum_{k=1}^{p^nrm} (p_k^{(i)}p_{k-1}^{(i)} - p_{k-1}^{(i)}p_{k-2}^{(i)}).
\end{align*}
The result follows.
\end{proof}
\begin{rem}
Also, we note that the periodicity also implies that $p_k^{(i)}$, $q_{k}^{(i)}$ are ``palindromic'' in the following sense:
\begin{align*}
p_{k-1}^{(i)} &= p_{p^nrm-k-1}^{(i)} && q_{k-1}^{(i)} = -q_{p^nrm-k-1}^{(i)}.
\end{align*}
As remarked above, the sequence $b_k$ for $k\geq 1$ is periodic with period $m$ and is ``palindromic'' with $b_k = b_{m-k}$ for $1\leq k< m$ while $2b_0 = b_m = b_{2m} = b_{3m} = \ldots$. Hence if $1\leq k \leq p^nrm/2$, then $b_k = b_{p^nrm-k}$, so
\begin{align*}
p^{n+1}s_k^{(i)} &= b_kp_{k-1}^{(i)} -p_{k-2}^{(i)} - p_k^{(i)}  \\
&= b_{p^nrm - k}p_{p^nrm-k-1}^{(i)} -p_{p^nrm-k}^{(i)} - p_{p^nrm-k-2}^{(i)} \\
&= p^{n+1}s_{p^nrm-k}
\end{align*}
and similarly $t_k^{(i)} = - t_{p^nrm-k}^{(i)}$. This implies that we can replace the upper index of the sum on $k$ with $p^nmk/2$ and still maintain the same $\p_n$-adic order. Of course, we could for the same reason ignore the $2$ in the denominator of our sum in the corollary, but it is natural to include this factor of $2$ since
\begin{align*}
\sum_{i=1}^{p^n(p-1)} \zeta_{p^n}^i \sum_{k=1}^{p^nrm/2} \frac{ t_{k}^{(i)}q_{k-1}^{(i)} + \zeta_{p^n}^{e_2} s_{k}^{(i)} p_{k-1}^{(i)}}{2p^{n+1}} \in \Z[\zeta_{p^n}]
\end{align*}
In fact, since the map in \ref{1to1} is two-to-one when $p\equiv 1\pmod{4}$, we can replace the upper index on $i$ with $p^n(p-1)/2$ in this case and still conclude the sum is in $\Z[\zeta_{p^n}]$ with the same $\p_n$-adic order.
\end{rem}
\begin{exam}
Let $p\geq 5$ be a Fermat prime and let $\ell \equiv 3\pmod{4}$ be a prime such that $p$ is inert in $\Q(\sqrt{-\ell})$ and $\Q(\sqrt{\ell})$ has class number $1$. Then $\ell$ is quadratic non-residue modulo $p$, so here we can choose $g=\ell$ assuming additionally that $p^2\nmid \ell^{p-1}-1$. In this case $\zeta_{p^n}=\psi_n(\ell)$ is a primitive $p^n$th root of unity with $\psi_n(g)^2 = \zeta_{p^n}^2$, so we do not have to worry about computing $e_2$ here. For $p$ fixed, these conditions on $\ell$ are just congruence conditions modulo $4p^2$ plus the assumption that $\Q(\sqrt{\ell})$ has class number 1, so there should be many such examples. We just need to check the conditions on the fundamental unit in these cases in order for the corollary to apply.

Let us consider the concrete case of $p=5$ and $\ell=47$. Then $5$ is inert in both $K=\Q(\sqrt{\ell})$ (class number $1$) and $\Q(\sqrt{-\ell})$.  We have
\begin{align*}
\sqrt{47} = [[7; \overline{7, 14}]]
\end{align*}
so $m=2$, the class number of $\Q(\sqrt{-\ell})$ is $(4-3+17-3)/3 = 5$, and the fundamental unit of $K$ is
\begin{align*}
\varepsilon = p_{2-1}+q_{2-1}\sqrt{\ell} = 48+7\sqrt{47}.
\end{align*}
It is easy to check that $\varepsilon$ has order $6 =p+1$ modulo $5$ and that $25\nmid \varepsilon^6-1$. For $n=1$, we compute
\begin{align*}
&\ord_{\p_1} \sum_{i=1}^{10} \zeta_5^{2i} \sum_{k=1}^{30} \frac{ t_{k}^{(i)}q_{k-1}^{(i)} + \zeta_{5} s_{k}^{(i)} p_{k-1}^{(i)}}{50} \\
= &\ord_{\p_1}(6\zeta_{5}^3 + \zeta_{5}^2 + 5\zeta_{5} - 2) = 2 < \varphi(5).
\end{align*}
Hence $\lambda_5(-47)+\lambda_5(-4)=2$. Since $5$ divides the class number of $\Q(\sqrt{-47})$ and $5$ is split in $\Q(i)$, we must have both $\lambda_5(-47)\geq1$ and $\lambda_5(-4)\geq 1$, so $\lambda_5(-4)= 1 = \lambda_5(-47)$.
\end{exam}

%


\nocite{Wash}

\bibliographystyle{amsalpha}
\bibliography{../References}

\end{document}